\newtheorem{teo}{Theorem}[section]
\newtheorem{lem}[teo]{Lemma}
\newtheorem{cor}[teo]{Corollary}
\newtheorem{prop}[teo]{Proposition}
\newtheorem{es}[teo]{Example}
\newtheorem{defi}[teo]{Definition}
\newtheorem{oss}[teo]{Remark}
\newcommand{\ud}{\mathrm{d}}
\newcommand{\mm}{\mathfrak m}
\newcommand{\supp}{\mathrm{supp}}
\renewcommand{\Phi}{\varPhi}
\newcommand{\R}{{\mathbb{R}}}
\newcommand{\bbR}{{\mathbb{R}}}
\newcommand{\bbN}{{\mathbb{N}}}
\newcommand{\average}{{\mathchoice {\kern1ex\vcenter{\hrule height.4pt
width 6pt
depth0pt} \kern-9.7pt} {\kern1ex\vcenter{\hrule height.4pt width 4.3pt
depth0pt}
\kern-7pt} {} {} }}
\newcommand{\ave}{\average\int}
\newcommand{\Leb}[1]{{\mathscr L}^{#1}}
\newcommand{\PI}{{\mathsf{PI}}}
\def\XXint#1#2#3{{\setbox0=\hbox{$#1{#2#3}{\int}$}
\vcenter{\hbox{$#2#3$}}\kern-.5\wd0}}
\numberwithin{equation}{section}
\title{Weighted Sobolev spaces on metric measure spaces}
\author[Luigi Ambrosio]{Luigi Ambrosio}
\address[Luigi Ambrosio]{Scuola Normale Superiore, Piazza dei Cavalieri 7, Pisa 56126, Italy}
\email[Luigi Ambrosio]{Luigi.Ambrosio@sns.it}
\author[Andrea Pinamonti]{Andrea Pinamonti}
\address[Andrea Pinamonti]{Dipartimento di Matematica, Universita di Bologna, Piazza di Porta San Donato 5, Bologna 40126, Italy}
\email[Andrea Pinamonti]{Andrea.Pinamonti@gmail.com}
\author[Gareth Speight]{Gareth Speight}
\address[Gareth Speight]{Department of Mathematical Sciences, University of Cincinnati, 2815 Commons Way, Cincinnati, OH 45221, United States}
\email[Gareth Speight]{Gareth.Speight@uc.edu}
\begin{document}

\begin{abstract}
We investigate weighted Sobolev spaces on metric measure spaces $(X,\ud,\mm)$. Denoting by $\rho$ the weight function,
we compare the space $W^{1,p}(X,\ud,\rho\mm)$ (which always concides with the closure $H^{1,p}(X,\ud,\rho\mm)$ of Lipschitz functions)
with the weighted Sobolev spaces $W^{1,p}_\rho(X,\ud,\mm)$ and $H^{1,p}_\rho(X,\ud,\mm)$ defined as in the Euclidean theory of weighted Sobolev spaces. 
Under mild assumptions on the metric measure structure and on the weight we show that  
$W^{1,p}(X,\ud,\rho\mm)=H^{1,p}_\rho(X,\ud,\mm)$. We also adapt the results in \cite{Muk} and in the recent paper \cite{Zhi}
to the metric measure setting, considering appropriate conditions on $\rho$ that ensure the equality 
$W^{1,p}_\rho(X,\ud,\mm)=H^{1,p}_\rho(X,\ud,\mm)$.
\end{abstract}

\maketitle
\section{Introduction}

The theory of Sobolev spaces $W^{1,p}(X,\ud,\mm)$ with $p\in (1,\infty)$ on metric measure spaces $(X,\ud,\mm)$ has by now reached a mature stage, after the seminal papers
\cite{Che}, \cite{Sha}, the more recent developments in \cite{AGS12} and the monographs \cite{BJ}, \cite{ShaK}. In this context, it is natural to investigate to what extent the Sobolev space is sensitive to the reference measure
$\mm$. It is clear that the measure $\mm$ is involved, since we impose $L^p(\mm)$ summability of the weak gradient, but things are more subtle. Indeed,
the measure $\mm$ is also involved in the definition of $(p,\mm)$-modulus $\mathrm{Mod}_{p,\mm}$ (Definition~\ref{def:modulus}) which, in turn, 
plays a role in the axiomatization in \cite{Sha}: by definition, $f\in W^{1,p}(X,\ud,\mm)$ 
if there exist a representative $\widetilde{f}\in L^p(\mm)$ of $f$ and $g\in L^p(\mm)$ such that
$$
|\widetilde{f}(\gamma(1))-\widetilde{f}(\gamma(0))|\leq\int_0^1 g(\gamma(t))|\dot\gamma(t)|\ud t
$$
along $\mathrm{Mod}_{p,\mm}$-a.e. absolutely continuous curve $\gamma:[0,1]\to X$. If such a function $g$ exists, then there is one with minimal $L^{p}(\mm)$ norm which is called the minimal gradient.

The definition adopted in \cite{Che}, instead, is equivalent but based on the approximation in $L^p(\mm)$ with functions having an upper gradient in $L^p(\mm)$.
More recently, in \cite{AmbGiSav,AGS12} it has been proved that $W^{1,p}(X,\ud,\mm)=H^{1,p}(X,\ud,\mm)$, where the latter space is defined
as the collection of all $L^p(\mm)$ functions for which there exist $f_n\in{\rm{Lip}}(X)\cap L^p(\mm)$ with $\int_X|f_n-f|^p\ud\mm\to 0$
and 
$$
\sup_{n\in\mathbb N}\int_X|\nabla f_n|^p\ud\mm<\infty
$$
(here and in the sequel $|\nabla g|$ denotes the local Lipschitz constant of $g$). In this case, following \cite{Che},
the minimal gradient is defined by considering all functions larger than weak $L^p(\mm)$ limits of local Lipschitz constants 
of sequences of Lipschitz functions converging to $f$ in $L^{p}(\mm)$, and considering the function with smallest $L^p(\mm)$ norm.
This general ``$H~=~W$'' result does not depend on structural assumptions on the metric measure structure: $(X,\ud)$ complete
and separable and $\mm$ a locally finite Borel measure are sufficient for the validity of this identification theorem. In this introduction
we shall denote by $|\nabla f|_w$ the minimal gradient arising from both unweighted $W$ and $H$ definitions, not emphasizing its potential dependence on $p$ (see \cite{DmSp}).

Given a Borel weight function $\rho:X\to [0,\infty]$,
in this paper we compare spaces $H~=~W$ relative to the metric measure structure $(X,\ud,\rho\mm)$ with the weighted spaces
built as in the Euclidean theory (namely $X=\R^n$, $\ud=$Euclidean distance, $\mm=\Leb{n}$, the
Lebesgue measure in $\R^n$). The first weighted space is
\begin{equation}\label{eq:Alica4}
W^{1,p}_\rho:=\left\{f\in W^{1,1}(X,\ud,\mm):\ |f|+|\nabla f|_w\in L^p(\rho\mm)\right\}
\end{equation}
endowed with the norm
$$
\|f\|_\rho:=\biggl(\int_X|f|^p\rho \ud\mm+\int_X|\nabla f|_w^p\rho\ud\mm\biggr)^{1/p},
$$
where $|\nabla f|_w$ is the minimal $1-$weak gradient of $f$ with respect to the unweighted space $(X,\ud,\mm)$.
Minimal regularity requirements (which provide respectively local finiteness of $\rho\mm$ and a basic embedding in $W^{1,1}$) are that 
$\rho\in L^1_{{\rm loc}}(\mm)$ and $\rho^{-1}\in L^{1/(p-1)}(\mm)$. 
The second space we will consider is the subspace $H^{1,p}_\rho$ of $W^{1,p}_{\rho}$ defined by 
\begin{equation}\label{ii}
H^{1,p}_\rho:=\overline{{\rm{Lip}}(X)\cap W^{1,p}_\rho}^{\|\cdot\|_{\rho}}.
\end{equation} 

Even when the metric measure structure is Euclidean, it is well known that $H^{1,p}_\rho$ can be strictly included in $W^{1,p}_\rho$, see Section~\ref{sec:5}
for a more detailed discussion and examples. This gap suggests a discrepancy between the weighted spaces $H=W$ 
of the metric theory, obtained by considering $\rho\mm$ as reference measure, and the spaces $W^{1,p}_\rho$, $H^{1,p}_\rho$.

Our first main result states that if $\rho\in L^1_{{\rm loc}}(\mm)$, 
$\rho^{-1}\in L^{1/(p-1)}(\mm)$, $(X,\ud,\mm)$ is doubling and supports a $1$-Poincar\'e inequality
for Lipschitz functions, then
\begin{equation}\label{eq:may24}
W^{1,p}(X,\ud,\rho\mm)=H^{1,p}_\rho
\end{equation}
and the two spaces are isometric. Hence, the smaller of the two weighted Sobolev spaces can be naturally identified with the 
Sobolev space of the metric theory, with weighted reference measure $\rho\mm$.

In light of the equality $H=W$ and the Euclidean counterexamples to $H^{1,p}_\rho=W^{1,p}_\rho$, it is natural to expect that stronger
integrability properties of $\rho$ are needed to establish the equality $W^{1,p}(X,\ud,\rho\mm)=W^{1,p}_\rho$, namely 
\begin{equation}\label{eq:may23}
W^{1,p}(X,\ud,\rho\mm)=\left\{f\in W^{1,1}(X,\ud,\mm):\ |f|+|\nabla f|_w\in L^p(\rho\mm)\right\}.
\end{equation}
Notice that the inclusion $\subset$ readily follows by \eqref{eq:may24}.
Our second main result shows that \eqref{eq:may23} holds provided $(X,\ud,\mm)$ is doubling, supports a $1$-Poincar\'e inequality
for Lipschitz functions and $\rho$ satisfies the asymptotic condition
\begin{equation}\label{eq:may23bis}
\liminf_{n\to\infty}\frac{1}{n^p}\biggl(\int_X\rho^n\ud\mm\biggr)^{1/n}\biggl(\int_X\rho^{-n}\ud\mm\biggr)^{1/n}<\infty.
\end{equation}
This condition appeared first in the Euclidean context in \cite{Zhi}, dealing with $H^{1,2}_\rho=W^{1,2}_\rho$, 
see also the recent extension \cite{Surn} to any power $p>1$ and even to variable exponents.
As we illustrate below, the proof in \cite{Zhi} is sufficiently robust to be adapted, with minor variants,
to a nonsmooth context.

In view of the characterization in \cite{Sha}, we believe that \eqref{eq:may23} is conceptually interesting. Indeed, functions in the left hand side of \eqref{eq:may23}
are absolutely continuous (modulo the choice of an appropriate representative) along $\mathrm{Mod}_{p,\rho\mm}$-a.e. curve, while
functions in the right hand side are absolutely continuous along $\mathrm{Mod}_{1,\mm}$-a.e. curve. On the other hand, even with $p=1$,
it seems very difficult to connect the two notions of negligibility if $\rho$ and $\rho^{-1}$ are unbounded. As a matter of fact our proof
is very indirect and it would be nice to find a more direct explaination of the validity of \eqref{eq:may23}.

We conclude the introduction by describing the structure of the paper. In Section~\ref{sec:2} we recall aspects of the theory of Sobolev spaces on metric measure spaces; we detail approximation results and the notion of measurable differentiable
structure from \cite{Che}.

In Section~\ref{sec:3} we introduce the weighted Sobolev spaces $W^{1,p}_\rho$ and $H^{1,p}_\rho$, showing
first completeness of $W^{1,p}_\rho$ under the assumption $\rho^{-1}\in L^{1/(p-1)}(\mm)$ and then reflexivity, under the additional
assumption that $(X,\ud,\mm)$ is doubling and satisfies a $1$-Poincar\'e inequality. The proof of reflexivity is particularly tricky and
it passes, as in \cite{Che} and \cite{AmbMar2}, through the construction of an equivalent uniformly convex norm. This involves a Lusin type approximation by Lipschitz functions. Notice this is not necessarily an approximation
in the norm of $W^{1,p}_\rho$,  since we know that additional assumptions on $\rho$ are needed to get density of Lipschitz functions, namely
the equality $W^{1,p}_\rho=H^{1,p}_\rho$. Then, using reflexivity and the $H=W$ theorem of the metric theory, we prove \eqref{eq:may24} in Theorem~\ref{thm:main1}.

Section~\ref{sec:4} is devoted to the proof of \eqref{eq:may23}, obtained in 
Theorem~\ref{mainthm} under the assumption \eqref{eq:may23bis}. Here we follow closely \cite{Zhi}, with
some minor adaptations due to the lack of differentiability of $f\mapsto\int |\nabla f|_w^p\ud\mm$ (potentially even for $p=2$).

In Section~\ref{sec:5} we recall an example from \cite{ChiSC}, showing that $H^{1,p}_\rho$ can be strictly included in $W^{1,p}_\rho$,
and we explore some variants of our results. In particular we relax the $1$-Poincar\'e assumption to a $p$-Poincar\'e assumption, 
modifying consequently the definitions of $W^{1,p}_\rho$ and $H^{1,p}_\rho$. Our main results still work, under the
$p$-Poincar\'e assumption, for these spaces and we prove that the new definitions coincide with \eqref{eq:Alica4} and \eqref{ii}
assuming the validity of the $1$-Poincar\'e inequality. Finally, we
discuss the notion of Muckenhoupt weight and the invariance of our assumptions on $(X,\ud,\mm)$ under the
replacement of $\mm$ by $\eta\mm$, with $\eta$ a Muckenhoupt weight.

\smallskip\noindent
{\bf Acknowledgements.} 
We would like to thank M. D. Surnachev for kindly explaining the change of weight in \cite{Zhi} (which we adapted in Proposition \ref{newweight}) and for informing us of his generalization of Zhikov's result to more general exponents \cite{Surn}. We thank J. Bj\"orn for pointing out a mistake in Proposition 5.2 in a previous version of the paper. The authors acknowledge the support of the grant ERC ADG GeMeThNES. The second author has been partially supported by the Gruppo
Nazionale per l'Analisi Matematica, la Probabilit\`a e le loro
Applicazioni (GNAMPA) of the Istituto Nazionale di Alta Matematica
(INdAM).

\section{Sobolev Spaces}\label{sec:2}

Throughout this paper we will denote by $(X, \ud)$ a complete separable metric space and by $\mm$ a locally finite (i.e. finite on bounded sets) Borel regular measure on $X$. In metric spaces Lipschitz functions play the role of smooth functions. We recall that a function $f\colon X \to \mathbb{R}$ is called Lipschitz if there exists $L\geq 0$ such that $|f(x)-f(y)|\leq L\ud(x,y)$ for all $x, \,y \in X$; we denote the smallest such constant $L$ by ${\rm{Lip}}(f)$ and denote the set of Lipschitz functions on $X$ by ${\rm{Lip}}(X)$. 

For a Lipschitz function $f$, a 
natural candidate for the modulus of gradient is given by the slope $|\nabla f|\colon X \to \mathbb{R}$, defined by
\begin{align*}
|\nabla f|(x):=\limsup_{y\to x} \frac{|f(y)-f(x)|}{\ud(y,x)}.
\end{align*}


\begin{defi}[Absolute continuity]
Let $J\subset \bbR$ be a closed interval and consider a curve $\gamma:J\to X$. We say that $\gamma$ is absolutely continuous if 
\begin{align}\label{AC}
\ud(\gamma(t),\gamma(s))\leq \int_s^t g(r)\ud r\quad \forall s,t\in J,\ s<t
\end{align}
for some $g\in L^1(J)$. 
\end{defi}

It is well known (see \cite[Proposition 4.4]{libro} for the proof) that every absolutely continuous curve $\gamma$ admits a minimal $g$ satisfying \eqref{AC}, called metric speed, denoted by $|\dot{\gamma}(t)|$ and given for a.e. $t\in J$ by
\[
|\dot{\gamma}(t)|=\lim_{s\to t}\frac{\ud(\gamma(s),\gamma(t))}{|s-t|}.
\]
We will denote by $C([0,1];X)$ the space of continuous curves from $[0,1]$ to $(X,\ud)$ endowed with the sup norm and by $AC([0,1],X)$ the subset of absolutely continuous curves. Using the metric derivative we can easily define curvilinear integrals, namely
$$
\int_\gamma g:=\int_0^1 g(\gamma(t))|\dot{\gamma}(t)|\ud t
$$
for all $g:X\to [0,\infty]$ Borel and $\gamma\in AC([0,1];X)$.

\begin{defi}[Modulus]\label{def:modulus}
Given $p\geq 1$ and $\Gamma\subset AC([0,1],X)$, the $p$-modulus $\mathrm{Mod}_{p,\mm}(\Gamma)$ is defined by
\[
\mathrm{Mod}_{p,\mm}(\Gamma):=\inf\Big\{\int_{X} h^p \ud \mm\ : \ \int_{\gamma} h\geq 1\ \forall \gamma\in \Gamma\Big\},
\] 
where the infimum is taken over all non-negative Borel functions $h:X\to [0,\infty]$.\\ We say that $\Gamma$ is $\mathrm{Mod}_{p,\mm}$-negligible 
if $\mathrm{Mod}_{p,\mm}(\Gamma)=0$.
\end{defi}

We can now give the definition of weak gradient and Sobolev space which we will use, see \cite{Sha}.

\begin{defi}[$p$-upper gradient]
For $p\geq 1$ we say that a Borel function $g:X\to [0,\infty]$ with $\int_{X} g^p\ud\mm<\infty$ is a 
$p$-weak upper gradient of $f$ if there exist a function $\widetilde f$ and a $\mathrm{Mod}_{p,m}$-negligible set $\Gamma$ 
such that $\widetilde f= f$\ $\mm$-a.e. in $X$ and
\begin{equation}\label{ugproperty}
|\widetilde f(\gamma_0)-\widetilde f(\gamma_1)|\leq \int_{\gamma} g\ \ud s\qquad\text{
for all $\gamma\in AC([0,1],X)\setminus \Gamma$.}
\end{equation}
\end{defi}

The following Theorem is classical, see \cite{ShaK,Sha} for a proof.

\begin{teo} \label{minlat}
For every $p\geq 1$ the collection of all $p$-weak upper gradients of a map $f:X\to \bbR$ is a closed convex lattice in $L^p(\mm)$.\\
Moreover, if the collection of all $p$-weak upper gradients of $f$ is nonempty then it contains a unique element of smallest $L^p(\mm)$ norm.
We shall denote it by $|\nabla f|_{p,\mm}$. 
\end{teo}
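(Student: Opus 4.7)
The plan is to prove the four assertions — closedness, convexity, lattice property, and existence/uniqueness of a minimum-norm element — in that order, with the lattice property as the technical centerpiece. Convexity is immediate from linearity of $\int_\gamma$ and countable subadditivity of $\mathrm{Mod}_{p,\mm}$: given upper gradients $g_1, g_2$ with exceptional sets $\Gamma_1, \Gamma_2$ and representatives $\tilde f_1, \tilde f_2$, any combination $\lambda g_1 + (1-\lambda)g_2$ works outside $\Gamma_1\cup\Gamma_2$ with a common representative (two representatives differ on an $\mm$-null set, which for $\mathrm{Mod}_{p,\mm}$-a.e.\ $\gamma$ is visited with zero $\Leb{1}$-measure; endpoint mismatches are absorbed into a further negligible family).

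For closedness, I would invoke Fuglede's lemma: if $g_n\to g$ in $L^p(\mm)$, then along a subsequence one has $\int_\gamma|g_n-g|\to 0$ for $\mathrm{Mod}_{p,\mm}$-a.e. $\gamma$. This is obtained by passing to a fast subsequence with $\sum_n\|g_n-g\|_p<\infty$, so that $h=\sum_n|g_n-g|\in L^p(\mm)$ forces $\int_\gamma h<\infty$ for $\mathrm{Mod}_{p,\mm}$-a.e. $\gamma$, making the individual terms vanish along such $\gamma$. One then takes the limit in $|\tilde f(\gamma(1))-\tilde f(\gamma(0))|\leq \int_\gamma g_n$.

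The lattice property is the key step; it suffices to show that $\min(g_1,g_2)$ is an upper gradient whenever $g_1,g_2$ are, since $\max(g_1,g_2)=g_1+g_2-\min(g_1,g_2)$ and convexity then handles the maximum. The clean route is to first upgrade the upper gradient inequality to an absolute continuity statement: for any upper gradient $g$ of $f$ and $\mathrm{Mod}_{p,\mm}$-a.e. $\gamma$, the composition $t\mapsto\tilde f(\gamma(t))$ is absolutely continuous with $|(\tilde f\circ\gamma)'(t)|\leq g(\gamma(t))|\dot\gamma(t)|$ for a.e. $t$. The upgrade requires enlarging the exceptional set to include curves failing the inequality on \emph{any} subcurve and checking that the enlarged family is still $\mathrm{Mod}_{p,\mm}$-negligible; this is true because any Borel $h$ admissible for the original bad set is admissible for the enlarged one, by $\int_\gamma h\geq \int_{\gamma|_{[s,t]}}h$. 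From the integral inequality on arbitrary subintervals one extracts AC and the pointwise derivative bound. Applying this simultaneously to $g_1,g_2$ yields $|(\tilde f\circ\gamma)'(t)|\leq\min(g_1,g_2)(\gamma(t))|\dot\gamma(t)|$ a.e., and integrating gives the upper gradient inequality for $\min(g_1,g_2)$.

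For the minimum-norm element, given a minimizing sequence $(g_n)$ I would set $\hat g_n=\min(g_1,\ldots,g_n)$, which by the lattice property are all upper gradients; the sequence is pointwise decreasing with $\|\hat g_n\|_p\leq\|g_n\|_p\to \inf$. Dominated convergence against $\hat g_1\in L^p(\mm)$ gives convergence to some $\hat g$ in $L^p(\mm)$, which is an upper gradient by closedness and attains the infimum. Uniqueness follows from the lattice property again: if $g,g'$ both minimize, then $\|\min(g,g')\|_p\leq\|g\|_p$ with equality forcing $g\leq g'$ $\mm$-a.e., and symmetrically $g=g'$. The main obstacle I anticipate is precisely the absolute continuity upgrade in the lattice step — verifying that enlarging the exceptional family to all subcurves keeps $\mathrm{Mod}_{p,\mm}$ zero, and then extracting a pointwise a.e. derivative bound from the integral inequality across arbitrary subintervals; the rest is routine manipulation of the definitions.
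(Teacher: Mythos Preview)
Your proposal is correct and follows the standard route (Fuglede's lemma for closedness, the absolute-continuity upgrade along curves for the lattice property, and a monotone minimizing sequence for the minimal element), but note that the paper does not actually prove this theorem: it states it as classical and refers to \cite{ShaK,Sha} for a proof. Your argument is essentially the one found in those references, so there is nothing to compare beyond observing that you have reproduced the standard proof that the paper chose to omit.
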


From now on we denote the $1$-weak gradient of $f$ with respect to $\mm$ simply by $|\nabla f|_{w}$, so $|\nabla f|_{w}=|\nabla f|_{1,\mm}$. Following \cite{Sha} we can now define the Sobolev space from which we will define weighted Sobolev spaces on metric measure spaces.

\begin{defi}[Sobolev space $W^{1,p}(X,\ud,\mm)$]\label{defsob}
For each $p\geq 1$ we define $W^{1,p}(X,\ud,\mm)$ to be the Banach space of ($\mm$-a.e. equivalence classes of) functions $f\in L^{p}(\mm)$ having a 
$p$-weak upper gradient, endowed with the norm 
\[
\|f\|_{W^{1,p}(\mm)}^p:=\int_{X} |f|^p \ud \mm+\int_{X} |\nabla f|^p_{p,\mm}\  \ud \mm.
\]
\end{defi}

It can be proved (see \cite[Proposition 5.3.25]{ShaK}) that $|\nabla f|_{p,\mm}$ is local, namely
\begin{equation}\label{eq:loca}
|\nabla f|_{p,\mm}=|\nabla g|_{p,\mm}\qquad \mm\mbox{-a.e. on}\ \{f=g\}
\end{equation}
for all $f,\,g\in W^{1,p}(X,\ud,\mm)$.

Definition~\ref{defsob} is by now classical and it goes back to the pioneering work \cite{Sha}, where the author also proved that if $p>1$ then the space $W^{1,p}(\mm)$ coincides with the Sobolev space defined by Cheeger \cite{Che} in terms of approximation by pairs $(f_n,g_n)$, with $f_n\to f$ in $L^p(\mm)$, $g_n$ an upper gradient of $f_n$ and $\{g_n\}_{n\in\bbN}$
bounded in $L^p(\mm)$. More recently, the first named author, Gigli and Savar\'e (see \cite{AGS12} for $p=2$ and \cite{AmbGiSav} for $p>1$) improved
this equivalence result proving existence of an approximation by Lipschitz functions, with slopes (or even asymptotic
Lipschitz constants, see \cite{AmbMar}) as upper gradients. More precisely, defining
\begin{eqnarray}
H^{1,p}(X,\ud,\mm)&=&\bigl\{f\in L^p(\mm):\ \text{$\exists$\, $f_n\in \mathrm{Lip}(X)\cap L^p(\mm)$} \\
 && \text{with
$f_n\to f$ in $L^p(\mm)$ and $\sup_n\int_X|\nabla f_n|^p\ud\mm<\infty$}\bigr\}\nonumber,
\end{eqnarray}
the following result holds.

\begin{teo} \label{thH=W}
$W^{1,p}(X,\ud,\mm)=H^{1,p}(X,\ud,\mm)$ for all $p>1$. In addition, for all functions $f\in W^{1,p}(X,\ud,\mm)$ the following holds:
\begin{equation}\label{eq:approximate}
\text{there exist $f_n\in \mathrm{Lip}(X)\cap L^p(\mm)$ with $f_n\to f$ and $|\nabla f_n|\to |\nabla f|_{p,\mm}$ in $L^p(\mm)$.}
\end{equation}
\end{teo}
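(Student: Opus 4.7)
The plan is to prove the two inclusions $H^{1,p}\subseteq W^{1,p}$ and $W^{1,p}\subseteq H^{1,p}$ separately, and then deduce the strong convergence of slopes from matching of the two energies together with uniform convexity of $L^p$.

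\textbf{Easy inclusion $H^{1,p}\subseteq W^{1,p}$.} Let $f\in H^{1,p}$, with $f_n\in{\rm Lip}(X)\cap L^p(\mm)$, $f_n\to f$ in $L^p(\mm)$ and $\sup_n\int_X|\nabla f_n|^p\ud\mm<\infty$. Since $p>1$, $L^p(\mm)$ is reflexive, so up to a subsequence $|\nabla f_n|\rightharpoonup g$ weakly in $L^p(\mm)$. By Mazur's lemma one can take convex combinations $\tilde f_k=\sum_j\alpha_j^{(k)}f_{n_j}$ whose slopes satisfy $|\nabla\tilde f_k|\le h_k:=\sum_j\alpha_j^{(k)}|\nabla f_{n_j}|\to g$ strongly in $L^p(\mm)$, while $\tilde f_k\to f$ in $L^p(\mm)$. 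Each slope is an upper gradient of the corresponding Lipschitz function along every absolutely continuous curve. Applying Fuglede's lemma (passing to a subsequence so that $\int_\gamma h_k\to\int_\gamma g$ along $\mathrm{Mod}_{p,\mm}$-a.e. $\gamma$) and choosing a good Borel representative $\tilde f$ of $f$ so that $\tilde f_k\to \tilde f$ pointwise along $\mathrm{Mod}_{p,\mm}$-a.e. curve, one passes to the limit in the upper gradient inequality to obtain $|\tilde f(\gamma_1)-\tilde f(\gamma_0)|\le\int_\gamma g$ for $\mathrm{Mod}_{p,\mm}$-a.e. $\gamma$. Hence $g$ is a $p$-weak upper gradient of $f$, so $f\in W^{1,p}$ with $|\nabla f|_{p,\mm}\le g$ $\mm$-a.e.

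\textbf{Hard inclusion and main obstacle.} Define the Cheeger-type relaxed energy
\[
\mathrm{Ch}_p(f):=\inf\Bigl\{\liminf_{n\to\infty}\int_X|\nabla f_n|^p\ud\mm\ :\ f_n\in{\rm Lip}(X)\cap L^p(\mm),\ f_n\to f\text{ in }L^p(\mm)\Bigr\},
\]
so that $H^{1,p}=\{f\in L^p(\mm):\mathrm{Ch}_p(f)<\infty\}$. The easy inclusion immediately gives $\mathrm{Ch}_p(f)\ge\int|\nabla f|_{p,\mm}^p\ud\mm$. The real difficulty, and the main obstacle, is the reverse inequality $\mathrm{Ch}_p(f)\le\int|\nabla f|_{p,\mm}^p\ud\mm$ for every $f\in W^{1,p}$, which cannot be produced by direct approximation. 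I would follow the test plan strategy of \cite{AGS12,AmbGiSav}: a test plan $\ppi$ is a Borel probability measure on $C([0,1];X)$ with bounded compression $(e_t)_\sharp\ppi\le C\mm$ and finite $p$-kinetic energy $\int\!\!\int_0^1|\dot\gamma_t|^p\ud t\ud\ppi(\gamma)<\infty$. The crucial observation is that $\mathrm{Mod}_{p,\mm}$-negligibility implies test-plan negligibility, so existence of a $p$-weak upper gradient $g$ of $f$ yields, by integration over $\ppi$,
\[
\int|f(\gamma_1)-f(\gamma_0)|\ud\ppi(\gamma)\le\int\!\!\int_0^1 g(\gamma_t)|\dot\gamma_t|\ud t\ud\ppi(\gamma).
\]
Running the gradient flow of $\mathrm{Ch}_p$ in $L^2(\mm)$ (or an appropriate $p$-analogue) and matching the corresponding energy dissipation identity with the integral inequality above forces the minimal relaxed gradient to be dominated by $|\nabla f|_{p,\mm}$, yielding $\mathrm{Ch}_p(f)\le\int|\nabla f|_{p,\mm}^p\ud\mm$ and hence $f\in H^{1,p}$.

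\textbf{Strong convergence of slopes.} By the definition of $\mathrm{Ch}_p$ and a standard diagonal argument there exist $f_n\in{\rm Lip}(X)\cap L^p(\mm)$ with $f_n\to f$ in $L^p(\mm)$ and $\int_X|\nabla f_n|^p\ud\mm\to\int_X|\nabla f|_{p,\mm}^p\ud\mm$. Using reflexivity, extract a weak $L^p$-limit $g$ of a subsequence of $|\nabla f_n|$. The easy inclusion gives $|\nabla f|_{p,\mm}\le g$ $\mm$-a.e., so the chain
\[
\int_X g^p\ud\mm\ge\int_X|\nabla f|_{p,\mm}^p\ud\mm=\lim_{n\to\infty}\int_X|\nabla f_n|^p\ud\mm\ge\int_X g^p\ud\mm
\]
(the last inequality by weak lower semicontinuity of the norm) forces $g=|\nabla f|_{p,\mm}$ $\mm$-a.e. together with $\||\nabla f_n|\|_p\to\|g\|_p$. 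Uniform convexity of $L^p(\mm)$ for $p>1$ (Radon--Riesz) upgrades weak convergence plus convergence of norms to strong convergence, producing an approximating sequence as in \eqref{eq:approximate}.
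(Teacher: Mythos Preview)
The paper does not give its own proof of this theorem; it is quoted as a background result from \cite{AGS12} (the case $p=2$) and \cite{AmbGiSav} (general $p>1$), and is used as a black box throughout. So there is no ``paper's proof'' to compare against.

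Your outline is faithful to the strategy actually carried out in those references: the easy inclusion via Mazur plus Fuglede, the hard inclusion via the relaxed energy $\mathrm{Ch}_p$ and test plans, and strong convergence of slopes via Radon--Riesz. Two remarks are in order. First, the paragraph on the hard inclusion is a roadmap rather than a proof: the sentence ``running the gradient flow of $\mathrm{Ch}_p$ \dots\ forces the minimal relaxed gradient to be dominated by $|\nabla f|_{p,\mm}$'' hides essentially all of the work. In \cite{AGS12,AmbGiSav} this step requires the Hopf--Lax semigroup and its subsolution property for the Hamilton--Jacobi equation, the construction of test plans from the gradient flow (via a superposition/representation theorem for curves of probability measures), and a sharp energy dissipation computation along the flow; none of these ingredients are elementary, and the argument does not go through without them. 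Second, in your last paragraph the argument as written only yields strong convergence along a subsequence, which is of course enough for \eqref{eq:approximate}; if you want the full sequence, note that every subsequence has a further subsequence with the same weak limit $|\nabla f|_{p,\mm}$, hence the whole sequence converges weakly, and then norm convergence plus uniform convexity finishes.
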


On the contrary, the picture for $p=1$ is far from being complete since at least three definitions are available 
(see also \cite{AmbMar2} and the forthcoming paper \cite{AmbPiSpe} for a discussion on this subject). 

For our analysis of weighted Sobolev spaces we will require that the measure $\mm$ is doubling and that a $p$-Poincar\'e inequality holds; we recall these properties now. Doubling metric measure spaces which satisfy a $p$-Poincar\'e inequality are often called $\PI_p$ spaces and there are many known examples which differ from ordinary Euclidean spaces, see for instance \cite{HaK,Hei,Hei2}.

\begin{defi}[Doubling]\label{defi:doub}
A locally finite Borel measure $\mm$ on $(X,\ud)$ is doubling if it gives finite positive measure to balls and there exists a constant $C>0$ such that
\begin{align}\label{doub}
\mm(B(x,2r))\leq C\mm(B(x,r))\quad \forall x\in X,\ r>0.
\end{align}
In this case we also say that the metric measure space $(X,\ud,\mm)$ is doubling.
\end{defi}

A metric measure space gains additional structure if a Poincar\'e inequality is imposed; this type of inequality is a control on the local variation of 
a Lipschitz function using infinitesimal behaviour encoded by the slope.

\begin{defi}[$p$-Poincar\'e]\label{defi:poinc}
For $p\in [1,\infty)$, we say that a $p$-Poincar\'e inequality holds for Lipschitz functions if there exist 
constants $\tau,\,\Lambda>0$ such that for every $f\in {\rm{Lip}}(X)$ and for every $x\in \supp(\mm)$, $r>0$ the following inequality holds:
\begin{align}\label{poinc}
\ave_{B(x,r)}|f-f_{B(x,r)}|\ud \mm\leq \tau r \biggl(\ave_{B(x,\Lambda r)} |\nabla f|^p \ud \mm\biggr)^{1/p},
\end{align}
where, here and in the sequel,
\[ f_{A}=\ave_{A} f \ud \mm:=\frac{1}{\mm(A)} \int_{A} f \ud \mm.\]
We will say that a constant is structural if it depends only on the doubling constant in \eqref{doub} and the constants $\tau,\,\Lambda$ in \eqref{poinc}.
\end{defi}

Notice that, by the H\"older inequality, the $\PI_p$ condition becomes weaker as $p$ increases, so $\PI_1$ is the strongest
possible assumption. A remarkable result (see \cite{KZ}) is that $\PI_p$ is an open ended condition
in $(1,\infty)$, namely $\PI_p$ for some $p\in (1,\infty)$ implies $\PI_q$ for some exponent $q\in (1,p)$.

Thanks to \eqref{eq:approximate}, for all $p>1$ and $f\in W^{1,p}(X,\ud,\mm)$, under the $\PI_p$ assumption it holds 
\begin{align}\label{weakPoincW}
\ave_{B(x,r)}|f-f_{B(x,r)}|\ud\mm \leq \tau r \biggl(\ave_{B(x,\Lambda r)} |\nabla f|^p_{p,\mm}\ud\mm\biggr)^{1/p}
\end{align}
for all $x\in \supp(\mm)$ and $r>0$, where $\tau,\,\Lambda$ are as in \eqref{poinc} (see also \cite[Theorem 2]{Keith2}). The inequality is still valid with $p=1$ under
the $\PI_1$ assumption. Indeed, the space $W^{1,1}(X,\ud,\mm)$ is contained in the space $BV(X,\ud,\mm)$ of functions
having a ``measure'' upper gradient considered in \cite{AmbMar2}. The equivalence result with Miranda's definition of $BV$ 
(which parallels the ideas provided in \cite{AGS12} for the Sobolev spaces) provided in \cite{AmbMar2} ensures the
existence of sequence $(f_n)\subset
\mathrm{Lip}(X)\cap L^1(\mm)$ with $f_n\to f$ in $L^1(\mm)$ and $|\nabla f_n|\mm\to |Df|$ weakly as measures. Taking the limit,
one obtains \eqref{weakPoincW} with $|Df|(\overline{B}(x,\Lambda r))$ in the right hand side. In the case
when $f\in W^{1,1}(X,\ud,\mm)\subset BV(X,\ud,\mm)$ one can use the inequality $|Df|\leq |\nabla f|_{1,\mm}\mm$ 
to conclude, see also \cite{AmbPiSpe} for a more detailed discussion.

We now recall the relevant properties of the maximal operator.

\begin{defi} [Maximal operator] Given a locally integrable Borel function $f:X\to \bbR$, we define the maximal function 
$Mf\colon X \to [0,\infty]$ associated to $f$ by
\begin{equation}\label{eq:def_maximal}
M f(x):= \sup_{r>0}\ave_{B(x,r)} |f|\ud\mm .
\end{equation}
\end{defi}

Since $\mm$ is doubling we know \cite[Theorem 2.2]{Hei} that for $q>1$ the maximal operator is a bounded linear map from $L^{q}(\mm)$ to $L^{q}(\mm)$; 
more precisely, there exists a constant $C>0$ depending only on the doubling constant such that
\[\|M(f)\|_{L^{q}(\mm)}\leq \frac{C}{(q-1)^{1/q}}\|f\|_{L^{q}(\mm)}\]
for all $f\in {L^{q}(\mm)}$. For $q=1$ the maximal operator is also weakly bounded, namely
\begin{equation}\label{eq:maximal_estimates1}
\sup_{\lambda>0}\lambda\mm\bigl(\{M(g)>\lambda\}\bigr)\leq\int_X|g|\ud\mm.
\end{equation}
We will also need the asymptotic estimate
\begin{equation}\label{eq:maximal_estimates2}
\lim_{\lambda\to\infty}\lambda \mm\bigl(\{M(g)>\lambda\}\bigr)=0.
\end{equation}
This asymptotic version follows by \eqref{eq:maximal_estimates1}, taking the inclusion 
$$\{M(g)>2\lambda\}\subset \{M((|g|-\lambda)^+)>\lambda)\}$$ into account.

Recall that $x$ is a Lebesgue point of a locally integrable function $u$ if
$$
\lim_{r\downarrow 0}\ave_{B(x,r)}|u(y)-u(x)|\ud\mm(y)=0.
$$
This notion is sensitive to modification of $u$ in $\mm$-negligible sets. With a slight abuse of notation we shall also apply this notion to 
Sobolev functions, meaning that we have chosen a representative in the equivalence class.

We now state a key approximation property for functions in $W^{1,p}(X,\ud,\mm)$, valid under the doubling and $p$-Poincar\'e
assumptions. We give a sketch of proof for the reader's convenience, but these facts are well known, see for instance \cite{Che}, \cite{Sha}
or the more recent paper \cite{AmbMar} where some proofs are revisited.

\begin{prop}\label{prop:propW11}
Assume that $p\in [1,\infty)$ and that $(X,\ud,\mm)$ is a $\PI_p$ metric measure space. Then,
for all $f\in W^{1,p}(X,\ud,\mm)$ there exist $f_n\in \mathrm{Lip}(X)\cap W^{1,p}(X,\ud,\mm)$ and Borel sets $E_n$
with:
\begin{itemize}
\item[(i)] $E_n\subset E_{n+1}$ and $\sup_n n^p\mm(X\setminus E_n)<\infty$, so that $\mm(X\setminus\cup_n E_n)=0$,
\item[(ii)] $|f_n|\leq n$, $\mathrm{Lip}(f_n)\leq Cn$, $f=f_n$ $\mm$-a.e. in $E_n$,
\item[(iii)] $|f_n-f|\to 0$ and $|\nabla(f_n-f)|_{p,\mm}\to 0$ in $L^p(\mm)$. 
\end{itemize}
Furthermore, there is a structural constant $c$ such that for all $f\in \mathrm{Lip}(X)\cap W^{1,p}(X,\ud,\mm)$,
\begin{equation}\label{eq:may242}
|\nabla f|_{p,\mm}\leq |\nabla f|\leq c|\nabla f|_{p,\mm}\qquad\text{$\mm$-a.e. in $X$}.
\end{equation}
\end{prop}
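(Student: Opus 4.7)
The plan is to implement the classical Hajlasz-type Lusin approximation for Sobolev functions on PI spaces and to deduce the slope/minimal-gradient comparison as a byproduct. Both parts will rest on a pointwise Hajlasz inequality derived from \eqref{weakPoincW}.

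First, for the pointwise representative $\widetilde f$ defined at every Lebesgue point of $f$, I would establish the estimate
\[
|\widetilde f(x)-\widetilde f(y)|\leq C\,\ud(x,y)\Bigl[\bigl(M|\nabla f|_{p,\mm}^p\bigr)^{1/p}(x)+\bigl(M|\nabla f|_{p,\mm}^p\bigr)^{1/p}(y)\Bigr]
\]
with $C$ structural, via the standard telescoping along dyadic balls $B_k=B(x,2^{-k}\ud(x,y))$ shrinking to $x$: each increment $|f_{B_k}-f_{B_{k+1}}|$ is controlled by $\ave_{B_k}|f-f_{B_k}|$ up to doubling, which in turn is bounded via \eqref{weakPoincW}, producing a geometric series in the maximal function at $x$ (and analogously at $y$).

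Setting $E_n:=\{x\in L_f\,:\ |\widetilde f(x)|\leq n,\ (M|\nabla f|_{p,\mm}^p)^{1/p}(x)\leq n/(2C)\}$, where $L_f$ is the set of Lebesgue points of $f$, the sets are nested; the bound $\sup_n n^p\mm(X\setminus E_n)<\infty$ in (i) follows from Chebyshev on $f\in L^p(\mm)$ and the weak-$(1,1)$ estimate \eqref{eq:maximal_estimates1} applied to $|\nabla f|_{p,\mm}^p\in L^1(\mm)$, while the sharper decay $n^p\mm(X\setminus E_n)\to 0$ needed below is given by \eqref{eq:maximal_estimates2} applied with $g=|\nabla f|_{p,\mm}^p$ and $\lambda=n^p$. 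On $E_n$ the pointwise inequality shows $\widetilde f$ is $n$-Lipschitz; I then extend to $X$ by the McShane formula and truncate at $\pm n$, producing $f_n\in{\rm{Lip}}(X)$ with $|f_n|\leq n$ and $\mathrm{Lip}(f_n)\leq Cn$, matching (ii).

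For (iii), $L^p$-convergence of $f_n\to f$ follows from $f=f_n$ on $E_n$, $|f_n|\leq n$, and the splitting $\int|f_n-f|^p\,\ud\mm\leq 2^p\int_{X\setminus E_n}(|f|^p+n^p)\,\ud\mm\to 0$, combining absolute continuity of $|f|^p\mm$ with $n^p\mm(X\setminus E_n)\to 0$. The locality property \eqref{eq:loca} gives $|\nabla(f-f_n)|_{p,\mm}=0$ $\mm$-a.e. on $E_n$; off $E_n$ one splits $\int|\nabla f|_{p,\mm}^p\,\ud\mm\to 0$ by absolute continuity and $\int|\nabla f_n|_{p,\mm}^p\,\ud\mm\leq(Cn)^p\mm(X\setminus E_n)\to 0$. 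For \eqref{eq:may242}, $|\nabla f|_{p,\mm}\leq|\nabla f|$ is immediate since the slope of a Lipschitz function is an upper gradient, hence a $p$-weak one; conversely, writing the Hajlasz inequality with the ball-averages $g_r(\cdot):=(\ave_{B(\cdot,\Lambda r)}|\nabla f|_{p,\mm}^p)^{1/p}$ in place of the maximal function, at a Lebesgue point $x$ of $|\nabla f|_{p,\mm}^p$ one has $g_r(x)\to|\nabla f|_{p,\mm}(x)$ and, for $r=\ud(x,y)$, doubling gives $g_r(y)\leq C g_{(\Lambda+1)r}(x)$, which also tends to $|\nabla f|_{p,\mm}(x)$; so $\limsup_{y\to x}|f(x)-f(y)|/\ud(x,y)\leq c|\nabla f|_{p,\mm}(x)$. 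The chaining step is really the main obstacle: it must be organised so that it both supplies the maximal-function bound (for the Lusin construction) and, when specialised at Lebesgue points of $|\nabla f|_{p,\mm}^p$ combined with this doubling shift of centre from $y$ to $x$, sharpens to the pointwise value $|\nabla f|_{p,\mm}(x)$.
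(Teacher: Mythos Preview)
Your approach is essentially the paper's: the telescoping along dyadic balls to obtain the pointwise Haj\l asz inequality with $(M|\nabla f|_{p,\mm}^p)^{1/p}$, the definition of $E_n$ via sublevel sets of this and of $|f|$, McShane extension with truncation, and the convergence argument for (iii) via locality of the weak gradient together with $n^p\mm(X\setminus E_n)\to 0$. All of this is correct and mirrors the paper's proof of (i)--(iii).

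The gap is in your argument for the upper bound $|\nabla f|\leq c|\nabla f|_{p,\mm}$. You propose to write the Haj\l asz inequality with a \emph{single-scale} average $g_r(z)=\bigl(\ave_{B(z,\Lambda r)}|\nabla f|_{p,\mm}^p\bigr)^{1/p}$ and then shift the centre from $y$ to $x$ by doubling. But the chaining cannot produce a single-scale bound: the telescoping $|\widetilde f(x)-f_{B(x,r)}|\leq C\sum_{k\geq 0}2^{-k}r\,g_{2^{-k}r}(x)$ inevitably involves all scales below $r$, and there is no general inequality $g_s(x)\leq Cg_r(x)$ for $s<r$. What the chain actually delivers is the \emph{localized} maximal function $M_{cr}^{1/p}$ at both $x$ and $y$, exactly as in the paper's \eqref{eq:localiplip}. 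Your doubling shift $g_r(y)\leq Cg_{(\Lambda+1)r}(x)$ is valid at the fixed scale $r=\ud(x,y)$, but it does not transfer to $M_{cr}(\cdot)(y)$: for $s\ll r$ the ball $B(y,s)$ is not comparable to any ball centred at $x$, so the small-scale contributions at $y$ escape the shift. The paper itself only sketches this step, noting that $M_r(h)\downarrow |h|$ at Lebesgue points and referring to \cite{Che} and \cite[Proposition~47]{AmbMar} for the details; you are right to flag the chaining as the main obstacle, but the handling of the $y$-term is genuinely more delicate than a single centre shift.
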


\begin{proof} Recall the definition of the maximal operator $M$ w.r.t. $\mm$ from \eqref{eq:def_maximal}. By iterating the estimate \eqref{weakPoincW} on concentric balls (see for instance \cite{Che} or
\cite[Lemma~8.2]{AmbMar}) one can obtain the estimate
\begin{equation}\label{eq:localip}
|f(x)-f(y)|\leq C \ud(x,y) (M^{1/p}(|\nabla f|_{p,\mm}^p)(x)+M^{1/p}(|\nabla f|_{p,\mm}^p)(y))
\end{equation}
whenever $f\in W^{1,p}(X,\ud,\mm)$ and $x,\,y\in X$ are Lebesgue points of (a representative of) $f$. Set now  
\[g:=\max \{|f|,M^{1/p}(|\nabla f|^p_{p,\mm})\}.\]
We also define 
\[E_n:=\left\{x\in X:\ \text{$x$ is a Lebesgue point of $f$ and $g(x)\leq n$}\right\}.\]
Notice that since $\{g>n\}$ is contained in $\{|f|>n\}\cup\{ M(|\nabla f|^p_{p,\mm})>n^p\}$,
the set $X\setminus E_n$ has finite $\mm$-measure, and more precisely Markov inequality
and the weak maximal estimate give that $\sup_n n^p\mm(X\setminus E_n)<\infty$.

Using \eqref{eq:localip} and the definition of $g$ we obtain
\[|f(x)-f(y)|\leq Cn \ud(x,y),\qquad |f(x)|\leq n\]
for all $x, \,y \in E_n$. By the McShane lemma (see for instance \cite{Hei} for the simple proof) 
we can extend $f|_{E_{n}}$ to a Lipschitz function $f_n$ on $X$ preserving the Lipschitz constant and
the sup estimate, namely ${\rm Lip}(f_n)\leq Cn$ and
$|f_n|\leq n$. We claim that $|\nabla f_n|_{p,\mm}\leq Cn$ $\mm$-a.e. in $X$. Indeed, since
$|\nabla f_n|_{p,\mm}\leq |\nabla f_n|$ $\mm$-a.e in $X$, we get 
\[
|\nabla f_n|_{p,\mm}\leq |\nabla f_n|\leq \mathrm{Lip}(f_n)\leq Cn
\qquad\text{$\mm$-a.e. in $X$.}
\]
Furthermore, by the locality of the weak gradient we obtain 
$|\nabla (f_n-f)|_{p,\mm}=0$ $\mm$-a.e. on the set $E_n$. 

Using these facts and \eqref{eq:maximal_estimates2} it is straightforward to check, by dominated convergence, that
$f_n\to f$ in $L^p(\mm)$ and that $|\nabla (f_n-f)|_{p,\mm}\to 0$ in $L^p(\mm)$.
 
The proof of \eqref{eq:may242} relies on a localized version of \eqref{eq:localip}, namely
\begin{equation}\label{eq:localiplip}
|f(x)-f(y)|\leq C \ud(x,y) (M_{2\Lambda r}^{1/p}(|\nabla f|_{p,\mm}^p)(x)+M_{2\Lambda r}^{1/p}(|\nabla f|_{p,\mm}^p)(y))
\end{equation}
for all Lebesgue points $x,\,y\in X$ of $f$ with $\ud(x,y)<r$, where $M_s$ is the maximal operator
on scale $s$ (i.e. the supremum in \eqref{eq:def_maximal} is restricted to balls with radius smaller than $s$).
The idea of the proof is to differentiate at Lebesgue points $x$
of $|\nabla f|_{p,\mm}^p$, letting eventually $r\downarrow 0$ and using the fact that
$M_r(g)\downarrow |g|$ at Lebesgue points of $g$ as $r\downarrow 0$,
see \cite{Che} or \cite[Proposition~47]{AmbMar} for details.
\end{proof}

With a similar proof, using the boundedness of the maximal operator, one can prove the following proposition
(see \cite{AGS12}, \cite{DmSp} for counterexamples showing that the $\PI_q$ assumption can not be removed).

\begin{prop}\label{prop:propW11bis}
Let $(X,\ud,\mm)$ be a $\PI_q$ metric measure space and $p>q$. If it holds that $f~\in~W^{1,q}(X,\ud,\mm)$ and both $f$ and $|\nabla f|_{q,\mm}$ belong to
$L^p(\mm)$, then $f\in W^{1,p}(X,\ud,\mm)$. 
\end{prop}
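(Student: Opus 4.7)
The plan is to reduce to Theorem~\ref{thH=W} by producing Lipschitz approximants $f_n \to f$ in $L^p(\mm)$ with $\sup_n \int_X |\nabla f_n|^p \ud\mm < \infty$. The natural candidates are the Lusin-type approximants of Proposition~\ref{prop:propW11}, built from $f \in W^{1,q}(X,\ud,\mm)$ under the $\PI_q$ assumption. Concretely, I would set $g := \max\{|f|,\, M^{1/q}(|\nabla f|_{q,\mm}^q)\}$, define $E_n := \{x : \text{$x$ is a Lebesgue point of $f$ and $g(x) \leq n$}\}$, and let $f_n$ be a McShane extension of $f|_{E_n}$ satisfying $\mathrm{Lip}(f_n) \leq Cn$, $|f_n| \leq n$, and $f_n = f$ $\mm$-a.e.\ on $E_n$.

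The key new input is that, under the $L^p$ hypotheses on $|f|$ and $|\nabla f|_{q,\mm}$ together with $p > q$, one has $g \in L^p(\mm)$. Indeed $|\nabla f|_{q,\mm}^q \in L^{p/q}(\mm)$ with $p/q > 1$, so the strong $L^{p/q}$-boundedness of the maximal operator yields $M^{1/q}(|\nabla f|_{q,\mm}^q) \in L^p(\mm)$. This upgrades the weak-type estimate that was sufficient for Proposition~\ref{prop:propW11}: by dominated convergence,
\[
n^p \mm(X \setminus E_n) \leq \int_{\{g > n\}} g^p \ud\mm \longrightarrow 0.
\]
Consequently $f_n \to f$ in $L^p(\mm)$ is immediate from $f_n = f$ on $E_n$, the bound $|f_n| \leq n$, and the absolute continuity of the integral of $|f|^p$.

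The heart of the argument is the $L^p$ bound on $|\nabla f_n|$. Off $E_n$, the crude pointwise estimate $|\nabla f_n| \leq Cn$ suffices, since $n^p \mm(X \setminus E_n)$ is bounded by the preceding display. On $E_n$ the same estimate is far too coarse, as it would give $\int_{E_n}|\nabla f_n|^p \ud\mm \leq C^p n^p \mm(E_n)$, which blows up with $n$. I would instead first verify that $f_n \in \mathrm{Lip}(X) \cap W^{1,q}(X,\ud,\mm)$ (using $f \in W^{1,q}$ on $E_n$ and the weak-type bound $\sup_n n^q \mm(X \setminus E_n) < \infty$ off $E_n$), then apply the pointwise comparison \eqref{eq:may242} (with $q$ in place of $p$) together with the locality of the weak gradient \eqref{eq:loca} to obtain
\[
|\nabla f_n| \leq c\, |\nabla f_n|_{q,\mm} = c\, |\nabla f|_{q,\mm} \qquad \text{$\mm$-a.e.\ on } E_n.
\]
Integrating and using the hypothesis $|\nabla f|_{q,\mm} \in L^p(\mm)$ gives a bound independent of $n$. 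Theorem~\ref{thH=W} then yields $f \in W^{1,p}(X,\ud,\mm)$. The main obstacle is exactly this slope comparison on $E_n$: once one routes through $|\nabla f_n|_{q,\mm}$ via \eqref{eq:may242} and then transports to $|\nabla f|_{q,\mm}$ by locality, the remainder of the argument is a direct upgrade of the weak-type computations in Proposition~\ref{prop:propW11} to strong $L^p$ bounds via $g \in L^p(\mm)$.
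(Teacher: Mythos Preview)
Your proposal is correct and is precisely the argument the paper has in mind: the paper's ``proof'' consists of the single sentence ``With a similar proof, using the boundedness of the maximal operator, one can prove the following proposition,'' and you have fleshed out exactly this, replacing the weak-type maximal estimate in Proposition~\ref{prop:propW11} by the strong $L^{p/q}$-boundedness of $M$ (available since $p/q>1$) to obtain $g\in L^p(\mm)$, and then routing the slope bound on $E_n$ through \eqref{eq:may242} and locality \eqref{eq:loca}. Your identification of the only delicate point, namely controlling $|\nabla f_n|$ on $E_n$ via the $q$-weak gradient rather than the crude Lipschitz constant, is also exactly the content of the paper's hint.
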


In the sequel we will use the fact that $\PI_p$ spaces for some $p\geq 1$ admit a differentiable structure; we conclude this section by recalling some aspects of Cheeger's remarkable theory \cite{Che, KleMc}, which provides a differentiable structure that will play a role in the reflexivity of the weighted Sobolev spaces.

\begin{defi}\label{defi:diffstructure}
A measurable differentiable structure on a metric measure space $(X,\ud,\mm)$ is a countable collection of pairs $\{(U_{\alpha},\varphi_{\alpha})\}$, called local charts, that satisfy the following conditions:
\begin{enumerate}
	\item[(i)] Each $U_{\alpha}$ is a measurable subset of $X$ with positive measure, and $\mm(X\setminus \bigcup_{\alpha} U_{\alpha})=0$.
	\item[(ii)] Each $\varphi_{\alpha}$ is a Lipschitz map from $X$ to $\mathbb{R}^{N(\alpha)}$ for some integer $N(\alpha)\geq 1$, and moreover 
	$N:=\sup_\alpha N(\alpha)<\infty$.
	\item[(iii)] For every $f \in \mathrm{Lip}(X)$ and for every $\alpha$ there exists an $\mm$-measurable function $\ud^{\alpha} f:U_{\alpha}\to \bbR^{N(\alpha)}$ such that 
	\[
	\limsup_{y\to x}\frac{|f(y)-f(x)-\ud^{\alpha} f(x)\cdot (\varphi_{\alpha}(y)-\varphi_{\alpha}(x))|}{\ud(x,y)}=0\qquad\text{for $\mm$-a.e. $x\in X$}
	\]
and $\ud^{\alpha} f$ is unique up to $\mm$-negligible sets.
\end{enumerate}
\end{defi}

The following theorem is proved in \cite{Che}, here we state it in the form needed in this paper.

\begin{teo}[Existence of a measurable differentiable structure]\label{teo:struCheg}
If $(X,\ud,\mm)$ is a $\PI_p$ metric measure space for some $p\geq 1$, then $X$ admits a measurable differentiable structure and the integer $N$ in 
Definition~\ref{defi:diffstructure}(ii) depends only on the structural constants. 
Moreover, for all $\alpha$ and $\mm$-a.e. $x \in U_{\alpha}$, there is a Hilbertian norm $\|\cdot\|_x$ on $\mathbb{R}^{N(\alpha)}$ such that
$x\mapsto \|d^{\alpha}f(x)\|_x$ is $\mm$-measurable in $U_\alpha$ and
\begin{equation}
\|d^{\alpha} f(x)\|_x\leq |\nabla f|(x)\leq M\|d^{\alpha}f(x)\|_x\qquad\text{for $\mm$-a.e. $x\in U_{\alpha}$, for all $f\in {\rm Lip}(X)$,}
\end{equation}
where $M>0$ is a constant independent of $\alpha$.
\end{teo}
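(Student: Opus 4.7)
My plan would follow Cheeger's construction from \cite{Che}. The chart part is built by a maximality procedure: call a tuple $\boldsymbol{\varphi}=(\varphi_1,\dots,\varphi_k)\in{\rm Lip}(X)^k$ \emph{$\epsilon$-independent} on a Borel set $U$ if $|\nabla(\lambda\cdot\boldsymbol{\varphi})|\geq\epsilon|\lambda|$ $\mm$-a.e.\ on $U$ for every $\lambda\in\bbR^k$. The crucial quantitative input, using both doubling and the $p$-Poincar\'e inequality, is that there exists $N$ depending only on the structural constants such that no $\epsilon$-independent tuple with $k>N$ can exist on a set of positive measure for any fixed $\epsilon>0$. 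This is proved by a packing argument: at a common Lebesgue point $x$ of all the functions $|\nabla(\lambda\cdot\boldsymbol{\varphi})|^p$ as $\lambda$ ranges over a dense subset of $S^{k-1}$, the inequality \eqref{weakPoincW} produces pairwise well-separated points in $B(x,r)$ whose $\boldsymbol{\varphi}$-values approximately realize a prescribed orthonormal frame in $\bbR^k$, after which doubling caps $k$ in terms of the structural constants. A countable Zorn-style exhaustion then yields the partition $X=\bigsqcup U_\alpha$ modulo null sets, with maximal charts $\varphi_\alpha\colon X\to\bbR^{N(\alpha)}$ and $N(\alpha)\leq N$.

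For differentiability, given $f\in{\rm Lip}(X)$ and a chart $(U_\alpha,\varphi_\alpha)$, maximality forces $(\varphi_\alpha,f)$ to fail $\epsilon$-independence at $\mm$-a.e.\ $x\in U_\alpha$ for every $\epsilon>0$. A compactness argument produces $d^\alpha f(x)\in\bbR^{N(\alpha)}$ with $|\nabla(f-d^\alpha f(x)\cdot\varphi_\alpha)|(x)=0$, unique by independence of $\varphi_\alpha$. The upgrade from pointwise vanishing of the slope to the uniform first-order expansion required by Definition~\ref{defi:diffstructure}(iii) is obtained by applying \eqref{eq:localiplip} to the residual $f-d^\alpha f(x)\cdot\varphi_\alpha$ at a Lebesgue point, combined with the fact that $M_r(g)\downarrow|g|$ at Lebesgue points of $g$ as $r\downarrow 0$.

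The main obstacle is the construction of the Hilbertian norm $\|\cdot\|_x$. The natural candidate $N_x(v):=|\nabla(v\cdot\varphi_\alpha)|(x)$ is a norm on $\bbR^{N(\alpha)}$ by independence, but is generically not Hilbertian. Following Cheeger, I would introduce the rescaled $L^2$-oscillation
$$
\mathcal{E}_r(g)(x):=\frac{1}{r^2}\dashint_{B(x,r)}\bigl|g-g_{B(x,r)}\bigr|^2\,\ud\mm,
$$
which is manifestly a quadratic form in $g$. Via a diagonal subsequence $r_n\downarrow 0$ extracted over a countable dense set of directions $v\in\bbR^{N(\alpha)}$, one produces $Q_x(v):=\lim_n\mathcal{E}_{r_n}(v\cdot\varphi_\alpha)(x)$ $\mm$-a.e.\ in $U_\alpha$, which inherits quadraticity in $v$ and then extends by continuity to a quadratic form $Q_x$ on $\bbR^{N(\alpha)}$. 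The key technical claim is that $Q_x$ is bi-Lipschitz equivalent to $N_x^2$ with structural constants: the upper bound $Q_x(v)\leq CN_x(v)^2$ follows from the Poincar\'e inequality applied to $v\cdot\varphi_\alpha$, while the reverse $N_x(v)^2\leq CQ_x(v)$ uses the differentiation theory of the preceding paragraph in a direction realizing $N_x(v)$, together with lower semicontinuity of the slope. Setting $\|v\|_x:=Q_x(v)^{1/2}$ delivers the Hilbertian norm and the comparison with $|\nabla f|$, while measurability of $x\mapsto\|d^\alpha f(x)\|_x$ is automatic from the explicit construction as a countable limit of measurable averages. The genuinely delicate point is the lower bound $N_x^2\lesssim Q_x$: this is where PI and doubling are used essentially, and is the step that cannot be reduced to purely formal manipulations.
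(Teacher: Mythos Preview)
The paper does not give its own proof of this theorem: it is stated as a result from \cite{Che} (``The following theorem is proved in \cite{Che}, here we state it in the form needed in this paper''). So there is nothing to compare against at the level of argument, only the question of whether your sketch is a faithful outline of Cheeger's proof.

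Your treatment of the chart construction and of differentiability is essentially Cheeger's: the notion of $\epsilon$-independence, the structural bound on the maximal length of an independent tuple, the exhaustion into charts, and the passage from failure of independence to the existence and uniqueness of $d^\alpha f(x)$ are all correct in outline. The upgrade to the first-order Taylor expansion via \eqref{eq:localiplip} is also the right mechanism.

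Where you take an unnecessary detour is the Hilbertian norm. Once the chart $(U_\alpha,\varphi_\alpha)$ is in place, differentiability gives $|\nabla f|(x)=|\nabla(d^\alpha f(x)\cdot\varphi_\alpha)|(x)$ for $\mm$-a.e.\ $x\in U_\alpha$, so the map $v\mapsto N_x(v):=|\nabla(v\cdot\varphi_\alpha)|(x)$ is a genuine (Finsler) norm on $\bbR^{N(\alpha)}$. The statement only asks for \emph{some} inner-product norm equivalent to $N_x$ with a constant independent of $\alpha$. This is pure finite-dimensional linear algebra: by John's ellipsoid theorem, every norm on $\bbR^k$ is $\sqrt{k}$-equivalent to a Euclidean norm, and here $k=N(\alpha)\leq N$. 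Defining $\|\cdot\|_x$ to be the John ellipsoid norm of $N_x$ gives the two-sided bound with $M=\sqrt{N}$ immediately; measurability of $x\mapsto\|\cdot\|_x$ follows from measurability of $x\mapsto N_x$ (the John ellipsoid depends continuously on the norm). This is the route taken in \cite{Che} and in the standard expositions.

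Your $L^2$-oscillation construction is not wrong in spirit, but it introduces genuine extra difficulties that are not needed for the statement: the diagonal subsequence has to be chosen so that the limit is simultaneously defined and quadratic on a set that is closed under the parallelogram identity; the upper bound $Q_x\lesssim N_x^2$ requires a Sobolev--Poincar\'e self-improvement (the raw $p$-Poincar\'e controls $L^1$ oscillation, not $L^2$); and the lower bound $N_x^2\lesssim Q_x$, which you correctly flag as delicate, is an additional nontrivial argument. None of this is required once one observes that the problem is finite-dimensional with uniformly bounded dimension.
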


\section{Weighted Sobolev Spaces}\label{sec:3}

In this section we will define weighted Sobolev spaces and prove that, under natural integrability assumptions on the weight, we obtain a reflexive Banach space. Recall the $1$-weak upper gradient of $f$ with respect to $\mm$ is denoted by $|\nabla f|_{w}$, so $|\nabla f|_{w}=|\nabla f|_{1,\mm}$.

\begin{defi}[Weighted space $W^{1,p}_\rho(X,\ud,\mm)$]
Let $p>1$ and let $\rho:X\to [0,\infty]$ be a Borel function satisfying $\rho^{-1}\in L^{{1}/{(p-1)}}(\mm)$; we define the weighted Sobolev 
space $W^{1,p}_\rho(X,d,\mm)$ by
\begin{equation}\label{eq:newlabel}
W^{1,p}_{\rho}(\mm):=\Big\{f\in W^{1,1}(X,\ud,\mm) : \int_X|f|^p\ \rho \ud\mm+ \int_{X} |\nabla f|^p_{w}\ \rho \ud \mm<\infty\Big\}.
\end{equation}
We endow $W^{1,p}_\rho(X,\ud,\mm)$, shortened to $W^{1,p}_{\rho}$, with the norm:
\[\| f \|_{\rho}^p:=\int_X |f|^p\ \rho \ud\mm+\int_{X} |\nabla f|^p_{w}\  \rho\ud \mm.\]
\end{defi}

\begin{oss}{\rm 
The fact that $\|\cdot\|_{\rho}$ is a norm is a consequence of the observation $\rho>0$ $\mm$-a.e. in $X$ and of
the following elementary properties 
\begin{align*}
|\nabla (f+g)|_{w} &\leq |\nabla f|_{w}+|\nabla g|_{w}& \, &\mm\mbox{-a.e. in}\ X,\\
|\nabla (\lambda f)|_{w} &= |\lambda| |\nabla f|_{w}& \, &\mm\mbox{-a.e. in}\ X,\, \forall \lambda\in\bbR.
\end{align*}}
\end{oss}

Note that, using H\"older's inequality, it follows that
\begin{equation}\label{eq:embedding}
\|f\|_{W^{1,1}}\leq 2\left( \int \rho^{-\frac{1}{p-1}} \ud \mm \right)^{\frac{p-1}{p}} \|f\|_{\rho}.\end{equation}
That is, $W^{1,p}_\rho(\mm)$ embeds continuously into $W^{1,1}(X,\ud,\mm)$ provided $\rho^{-1}\in L^{1/(p-1)}(\mm)$; a similar
calculation shows that the definition of $W^{1,p}_\rho(\mm)$ would be unchanged if we replace $W^{1,1}(X,\ud,\mm)$ by
$W^{1,1}_{\rm loc}(X,\ud,\mm)$ (namely the space of functions whose $1$-weak upper gradient is integrable on bounded sets)
in \eqref{eq:newlabel}.

We use the embedding of $W^{1,p}_\rho(\mm)$ to prove completeness of the weighted Sobolev space, building on the completeness of $W^{1,1}(X,\ud,\mm)$.

\begin{prop}[Completeness of $W^{1,p}_\rho$]
For every $p>1$, the weighted Sobolev space $(W^{1,p}_{\rho},\|\cdot\|_{\rho})$ is a Banach space whenever
$\rho^{-1}\in L^{1/(p-1)}(\mm)$.
\end{prop}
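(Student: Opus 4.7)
\smallskip
\noindent\textbf{Plan of proof.} Let $(f_n)\subset W^{1,p}_\rho$ be a Cauchy sequence with respect to $\|\cdot\|_\rho$. The first step is to exploit the continuous embedding \eqref{eq:embedding}: it implies that $(f_n)$ is Cauchy in $W^{1,1}(X,\ud,\mm)$, which is a Banach space, so there exists $f\in W^{1,1}(X,\ud,\mm)$ with $f_n\to f$ in $L^1(\mm)$ and $|\nabla(f_n-f)|_w\to 0$ in $L^1(\mm)$. This already secures the candidate limit and places it in the ambient space $W^{1,1}$ appearing in the definition of $W^{1,p}_\rho$.

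The second step is to upgrade this to pointwise information. Passing to a (not relabeled) subsequence, we arrange that $f_n\to f$ $\mm$-a.e. in $X$ and $|\nabla(f_n-f)|_w\to 0$ $\mm$-a.e. in $X$. Combined with the elementary pointwise triangle inequality
$$
\bigl||\nabla f_n|_w-|\nabla f|_w\bigr|\leq |\nabla(f_n-f)|_w\qquad\mm\text{-a.e. in }X,
$$
which follows from sub-additivity of the $1$-weak gradient, we deduce $|\nabla f_n|_w\to|\nabla f|_w$ $\mm$-a.e.

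The third step is to apply Fatou's lemma to the nonnegative integrands $|f_n|^p\rho$ and $|\nabla f_n|_w^p\rho$. Since $(f_n)$ is $\|\cdot\|_\rho$-Cauchy, it is $\|\cdot\|_\rho$-bounded, and therefore
$$
\int_X|f|^p\rho\,\ud\mm+\int_X|\nabla f|_w^p\rho\,\ud\mm\leq\liminf_{n\to\infty}\|f_n\|_\rho^p<\infty,
$$
proving $f\in W^{1,p}_\rho$.

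Finally, to establish $f_n\to f$ in $\|\cdot\|_\rho$, fix $\varepsilon>0$ and pick $N$ with $\|f_n-f_m\|_\rho<\varepsilon$ for all $n,m\geq N$. For each fixed $n\geq N$, along the subsequence above $f_n-f_m\to f_n-f$ $\mm$-a.e. and $|\nabla(f_n-f_m)|_w\to|\nabla(f_n-f)|_w$ $\mm$-a.e., so a second application of Fatou's lemma gives
$$
\|f_n-f\|_\rho^p\leq\liminf_{m\to\infty}\|f_n-f_m\|_\rho^p\leq\varepsilon^p,
$$
whence $f_n\to f$ in $W^{1,p}_\rho$. The only mildly delicate point is the pointwise comparison of weak gradients in the second step; everything else is a standard Fatou/embedding argument, and no further structural hypothesis on $(X,\ud,\mm)$ beyond $\rho^{-1}\in L^{1/(p-1)}(\mm)$ is needed.
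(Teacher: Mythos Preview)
Your proof is correct and follows essentially the same approach as the paper's: use the continuous embedding \eqref{eq:embedding} into $W^{1,1}(X,\ud,\mm)$ to produce a candidate limit $f$, extract a subsequence with $\mm$-a.e. convergence of both $f_n$ and $|\nabla(f_n-f)|_w$, and then apply Fatou's lemma to pass to the limit in the $\|\cdot\|_\rho$-Cauchy estimates. The paper phrases the last step slightly differently (it uses $L^1(\mm)$-convergence of $|\nabla(f_n-f_m)|_w$ to $|\nabla(f-f_m)|_w$ before invoking Fatou), but this is a cosmetic variation of the same argument; one small point you leave implicit is that convergence of a subsequence together with the Cauchy property yields convergence of the full sequence.
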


\begin{proof} Suppose that $(f_{i})_{i=1}^{\infty}$ is a Cauchy sequence in $W_{\rho}^{1,p}$, and let $\omega_i \downarrow 0$ be such that $\|f_n-f_m\|_{\rho}^p\leq\omega_i$ whenever $n,\,m\geq i$. From \eqref{eq:embedding} and the
completeness of $W^{1,1}(X,\ud,\mm)$ we obtain that there exists $f\in W^{1,1}(X,\ud,\mm)$ such that $f_n\to f$ in $W^{1,1}(X,\ud,\mm)$ and hence, up to a subsequence, we can also assume that $f_n\to f$ pointwise $\mm-$a.e. Since $L^p(\rho \mm)$ is a Banach space and $L^p(\rho\mm)$ convergence implies
the existence of subsequences convergent $\rho\mm-$a.e. in $X$, we deduce $f$ is also the limit of $f_n$ in $L^p(\rho \mm)$. As $|\nabla (f_n-f)|_w\to 0$ in $L^1(\mm)$ implies
 $|\nabla (f_n-f_m)|_w\to |\nabla (f-f_m)|_w$ in $L^1(\mm)$, we can pass to the limit
 in $\||\nabla (f_n-f_m)|_w\|^p_{L^p(\rho\mm)}\leq \omega_i$  to obtain
$$
\int_X |\nabla (f-f_m)|_w^p\rho\ud\mm\leq\omega_i\qquad\forall m\geq i.
$$
This proves that $\int_X|\nabla f|_w^p\rho\ud\mm<\infty$, so that $f\in W^{1,p}_\rho$, and that
$f_m\to f$ in $W^{1,p}_\rho$.
\end{proof}

\begin{teo}[Reflexivity of $W^{1,p}_\rho$]\label{reflexivity}
Suppose $(X,\ud,\mm)$ is a $\PI_1$ metric measure space, $\rho\in L^1_{{\rm loc}}(\mm)$ and
$\rho^{-1}\in L^{{1}/{(p-1)}}(\mm)$.
Then $(W^{1,p}_{\rho},\|\cdot\|_{\rho})$ is reflexive for all $p>1$.
\end{teo}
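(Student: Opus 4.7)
The plan is to follow the Cheeger/Ambrosio–Marchese scheme and construct on $W^{1,p}_\rho$ an equivalent norm that is uniformly convex, using the measurable differentiable structure provided by Theorem~\ref{teo:struCheg}. Since $\mathsf{PI}_1$ implies $\mathsf{PI}_p$ for every $p\geq 1$, such a structure is available, and on each chart $U_\alpha$ we have a Hilbertian norm $\|\cdot\|_x$ on $\bbR^{N(\alpha)}$ satisfying
$$
\|d^\alpha f(x)\|_x\leq |\nabla f|(x)\leq M\|d^\alpha f(x)\|_x\qquad\text{for $\mm$-a.e. $x\in U_\alpha$ and all $f\in\mathrm{Lip}(X)$.}
$$
After replacing the $U_\alpha$ by a disjoint Borel partition (a routine measure-theoretic step), we can speak of a single vector field $df$ on $X$, measurable with respect to the partition.

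Next I extend $d^\alpha$ from $\mathrm{Lip}(X)$ to $W^{1,p}_\rho$ by \emph{locality}. Given $f\in W^{1,p}_\rho\subset W^{1,1}(X,\ud,\mm)$, I apply Proposition~\ref{prop:propW11} (which needs only $\PI_1$) to obtain Lipschitz truncations $f_n$ with $f=f_n$ on increasing sets $E_n$ and $\mm(X\setminus\bigcup_n E_n)=0$. The locality of the weak gradient \eqref{eq:loca} gives $|\nabla(f_n-f_m)|_{w}=0$ on $E_n\cap E_m$, and the corresponding locality of $d^\alpha$ on Lipschitz functions (an immediate consequence of its pointwise definition) lets me set $d^\alpha f:=d^\alpha f_n$ on $E_n$, unambiguously $\mm$-a.e. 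Combining $\|d^\alpha f_n\|_x\leq |\nabla f_n|\leq c|\nabla f_n|_w$ (via \eqref{eq:may242}) with the identification $|\nabla f|_w=|\nabla f_n|_w$ on $E_n$ yields
$$
\|d^\alpha f(x)\|_x\leq c|\nabla f|_w(x)\leq cM\|d^\alpha f(x)\|_x\qquad\text{$\mm$-a.e. in }X
$$
for every $f\in W^{1,p}_\rho$.

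Using these two-sided bounds, I define a new norm
$$
\|f\|_\rho^\prime:=\biggl(\int_X|f|^p\rho\ud\mm+\sum_\alpha\int_{U_\alpha}\|d^\alpha f(x)\|_x^p\,\rho(x)\ud\mm(x)\biggr)^{1/p},
$$
which is equivalent to $\|\cdot\|_\rho$. Via the map $T(f):=(f,(d^\alpha f)_\alpha)$, the space $(W^{1,p}_\rho,\|\cdot\|_\rho^\prime)$ embeds isometrically and linearly into the direct sum
$$
L^p(\rho\mm)\oplus_p\bigoplus_\alpha L^p\bigl(U_\alpha,\rho\mm;(\bbR^{N(\alpha)},\|\cdot\|_x)\bigr).
$$
Because each factor is an $L^p$ space of Hilbertian-valued functions with $p>1$, the target is uniformly convex (the Clarkson-type inequalities hold fiberwise and integrate). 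Completeness of $W^{1,p}_\rho$, already proved, ensures that the image $T(W^{1,p}_\rho)$ is closed, hence itself uniformly convex. Therefore $(W^{1,p}_\rho,\|\cdot\|_\rho^\prime)$ is uniformly convex, and by the Milman–Pettis theorem reflexive; reflexivity is invariant under passage to equivalent norms, so $(W^{1,p}_\rho,\|\cdot\|_\rho)$ is reflexive.

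The main obstacle is the extension of $d^\alpha$ to $W^{1,p}_\rho$: the Lusin-type Lipschitz approximation from Proposition~\ref{prop:propW11} converges in $W^{1,1}$ but, as warned in the introduction, \emph{not} in $\|\cdot\|_\rho$ in general. One must therefore define $d^\alpha f$ not as a limit in norm but as the common value, on the exhausting sets $E_n$, of the Lipschitz differentials, and use only the $\mm$-a.e. pointwise comparisons together with the locality of $|\nabla\cdot|_w$ to control $\|d^\alpha f\|_x$ by $|\nabla f|_w$ in $L^p(\rho\mm)$. The assumption $\rho^{-1}\in L^{1/(p-1)}(\mm)$ enters only through the embedding $W^{1,p}_\rho\hookrightarrow W^{1,1}$ needed to trigger Proposition~\ref{prop:propW11}, while $\rho\in L^1_{{\rm loc}}(\mm)$ guarantees that local Lipschitz functions supplied by the approximation lie in $L^p(\rho\mm)$.
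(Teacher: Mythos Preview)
Your strategy is the same as the paper's: use the Cheeger differentiable structure together with the Lusin--Lipschitz approximation of Proposition~\ref{prop:propW11} to build an equivalent uniformly convex norm on $W^{1,p}_\rho$, and conclude via Milman--Pettis. The execution, however, is organized differently, and in fact more efficiently. The paper extends only the \emph{scalar} quantity $D_x(f):=\|d^\alpha f_n(x)\|_x$ to general $f\in W^{1,p}_\rho$, obtaining a nonlinear map; this forces them to first prove uniform convexity of $\|\cdot\|_{\mathrm{Ch},Q}$ on $\mathrm{Lip}(X)\cap W^{1,p}_\rho$ with modulus independent of the weight $Q$ (their Step~2), and then run a delicate limiting argument with truncated weights $\rho_n=\rho\chi_{E_n}$ to pass from Lipschitz functions to all of $W^{1,p}_\rho$ (their Step~3). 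You instead extend the \emph{vector} $d^\alpha f$, which makes $T(f)=(f,(d^\alpha f)_\alpha)$ linear and yields a direct isometric embedding into a uniformly convex $L^p$-type space, bypassing Step~3 entirely. This works because the Hilbertian norm $\|\cdot\|_x$ on $\bbR^{N(\alpha)}$ in Theorem~\ref{teo:struCheg} is genuinely positive definite, so $\|d^\alpha(f_n-f_m)(x)\|_x=0$ forces $d^\alpha f_n(x)=d^\alpha f_m(x)$.

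One point to correct: the locality of $d^\alpha$ on Lipschitz functions is \emph{not} an immediate consequence of its pointwise asymptotic definition (that definition sees the behaviour of $f$ near $x$, not just on the coincidence set). The correct route---which the paper spells out and which you in fact have all the ingredients for---is: $f_n=f_m$ on $E_n\cap E_m$ gives $|\nabla(f_n-f_m)|_{w}=0$ $\mm$-a.e.\ there by \eqref{eq:loca}, then $|\nabla(f_n-f_m)|=0$ $\mm$-a.e.\ there by \eqref{eq:may242}, and finally $\|d^\alpha(f_n-f_m)\|_x\leq|\nabla(f_n-f_m)|=0$. You should also remark that the extension is independent of the particular approximating sequence $(f_n,E_n)$, which follows by the same argument applied to two interlaced sequences.
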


\begin{proof}
By assumption $(X,\ud,\mm)$ is a $PI_{1}$ space; therefore, by Theorem \ref{teo:struCheg}, it admits a differentiable structure consisting of charts $(U_{\alpha}, \varphi_{\alpha})$, where $U_{\alpha}\subset X$ is measurable and $\varphi_{\alpha}\colon X \to \mathbb{R}^{N(\alpha)}$ are Lipschitz with $N(\alpha)\leq N$, with respect to which Lipschitz functions are differentiable. Further, for all $f\in {\rm Lip}(X)$,
\begin{equation}\label{equivalence}
\|\ud^{\alpha} f(x)\|_x\leq |\nabla f|(x)\leq M\|\ud^{\alpha} f(x)\|_x\qquad\text{for $\mm$-a.e. $x\in U_{\alpha}$}
\end{equation}
where $\|\cdot\|_{x}$ is an inner product norm on $\mathbb{R}^{N(\alpha)}$ for $x \in U_{\alpha}$ and $M$ is a positive constant depending only on $N$.

Without loss of generality we assume the sets $U_{\alpha}$ are disjoint and denote derivatives by $\ud f(x)$ instead of $\ud^{\alpha} f(x)$ when $x \in U_{\alpha}$. We now observe that we can also assume $N(\alpha)=N$ for all $\alpha$ by replacing:
\begin{itemize}
\item The inner product norm $\|\cdot\|_{x}$ on $\mathbb{R}^{N(\alpha)}$ by the semi inner product norm $\|p(\cdot)\|_{x}$ on $\mathbb{R}^{N}$, where $p\colon \mathbb{R}^{N} \to \mathbb{R}^{N(\alpha)}$ is the projection onto the first $N(\alpha)$ coordinates for $x \in U_{\alpha}$. Here a semi inner product $\langle \cdot,\cdot \rangle$ satisfies the usual properties of an inner product except for positive definiteness - the corresponding semi inner product norm is then given by $\|v\|^{2}=\langle v, v\rangle$.
\item The derivative $\ud f(x) \in \mathbb{R}^{N(\alpha)}$ by $(\ud f(x),0) \in \mathbb{R}^{N}$ for $x \in U_{\alpha}$.
\end{itemize}
After this replacement the map $\ud$ still satisfies the equivalence \eqref{equivalence}. Clearly $\ud$ is a linear map from ${\mathrm{Lip}}(X)$ 
to the space of ($\mm$-a.e. defined) $\mathbb{R}^{N}$ valued measurable functions on $X$. We split the the proof of reflexivity into three steps. Note that the first step is known and follows from results of \cite{FHK}, but we present it explicitly for the sake of readability.

\smallskip
\textbf{Step 1.} \textit{\textit{We construct an equivalent norm on $W^{1,p}_{\rho}$.}}
\smallskip

We first define a (non linear) map $D$ from $W^{1,p}_{\rho}$ to non negative ($\mm$-a.e. defined) 
measurable functions on $X$, which we denote by $D_{x}(g)$ instead of $D(g)(x)$, satisfying:
\begin{align}\label{propN} 
D_{x}(\lambda g)&=|\lambda| D_{x}(g) &&\mm\mbox{-a.e.}\  x\in X, \ \forall g \in W^{1,p}_{\rho},\ \lambda \in \mathbb{R},\nonumber\\
D_{x}(g+h)&\leq D_{x}(g)+D_{x}(h) &&\mm\mbox{-a.e.}\  x\in X,\ \forall g, h\in W^{1,p}_{\rho},\\
\nonumber
D_{x}(f)&=\|\ud f(x)\|_x  &&\mm\mbox{-a.e.}\  x\in X,\ \forall f\in {\rm{Lip}}(X)\cap W^{1,p}_{\rho},\\
\label{estimate} D_{x}(g)\leq &|\nabla g|_{w}(x)\leq MD_{x}(g) &&\mm\mbox{-a.e. } x\in X,\  \forall g\in W^{1,p}_{\rho}.
\end{align}
Fix $g\in W^{1,p}_{\rho}\subset W^{1,1}$. By Proposition~\ref{prop:propW11}, there exists a sequence of Lipschitz functions $g_{n}$ such that $|\nabla(g_{n}-g)|_{w}\to 0$ in $L^{1}(\mm)$ and
\[\sum_{n=1}^{\infty} \mm \{g\neq g_{n}\}<\infty.\]
Let $A_{n}:=\{g=g_{n}\}$ and $G_{T}:=\cap_{n \geq T}A_{n}$; by the Borel-Cantelli lemma it follows that  $\cup_{T}G_{T}$ has full $\mm$-measure. For $n,\,m>T$ we have, by Proposition~\ref{prop:propW11}(i), $|\nabla (g_{n}-g_{m})|_{w}=0$ $\mm$-a.e. in $G_{T}$. Hence, by inequality \eqref{eq:may242}, $n, \,m>T$ implies $|\nabla (g_{n}-g_{m})|=0$ $\mm$-a.e. in $G_{T}$. We now claim that, for each fixed $x\in G_{T}$, $\|\ud g_{n}(x)\|_{x}$ is constant as a function of $n>T$. 
Indeed, for $n, \,m>T$, using \eqref{equivalence},
\begin{align*}
|\|dg_{n}(x)\|_{x}-\|dg_{m}(x)\|_{x}|&\leq \|d(g_{n}-g_{m})(x)\|_{x}\\
&\leq |\nabla (g_{n}-g_{m})|(x)\\
&=0.
\end{align*}
We define $D_{x}(g):=\|\ud g_{n}(x)\|_{x}$ for $x \in G_{T}$ and $n>T$. It is easy to show that if we took a different sequence of Lipschitz functions $\widetilde{g}_{n}$ with $\sum_{n=1}^{\infty} \mm \{g\neq \widetilde{g}_{n}\}<\infty$ then we obtain the same definition of $D_{x}(g)$ up to $\mm$-a.e. equivalence. 

Using the measurability of the differential map we easily obtain the measurability of $x\mapsto D_{x}(g)$. Clearly for every $f\in W^{1,p}_{\rho}$ we have $D_x(f)\geq 0$. For every $f,g\in \mathrm{Lip}(X)$ and $\lambda\in\bbR$ we know
\[\ud(f+g)=\ud(f)+\ud(g) \text{ and } \ud(\lambda f)=\lambda\ud(f)\qquad\text{$\mm$-a.e. in $X$.}\]
This implies \eqref{propN}, since approximations for $f$ and $g$ give rise to approximations for $f+g$ and $\lambda f$. In order to prove \eqref{estimate}, we first remark that, by Proposition~\ref{prop:propW11}(iii) and \eqref{estimate}, we get 
\begin{align}\label{estimate2}
\|\ud g_n(x)\|_x\leq |\nabla g_n|_{w}\leq M \|\ud g_n(x)\|_x.
\end{align}
Since $|\nabla g_{n}|_{w} \to |\nabla g|_{w}$ in $L^{1}(\mm)$, it follows, up to a subsequence, $|\nabla g_{n}|_{w}(x) \to |\nabla g|_{w}(x)$ for $\mm$-a.e.\  $x \in X$. Therefore, the conclusion follows by letting $n\to \infty$ in \eqref{estimate2} and recalling that $\|\ud g_n(x)\|_x$ is constant for large $n$.

Using \eqref{propN} and \eqref{estimate} it is easy to see that the following expression defines an equivalent norm on $W^{1,p}_\rho$:
\[\|f\|_{\mathrm{Ch},\rho}:=\biggl(\int_X (|f(x)|^{p}+(D_{x}(f))^{p}) \rho(x)\,\ud\mm(x)\biggr)^{1/p}. \]

\smallskip
\textbf{Step 2.} \textit{Suppose $Q\colon X\to [0,\infty)$ is a $\mm$-measurable function; then the seminorm
\[f\mapsto \|f\|_{\mathrm{Ch},Q}= \biggl(\int_X (|f(x)|^{p}+(D_{x}(f))^{p}) Q(x)\,\ud \mm(x)\biggr)^{1/p}\]
is uniformly convex on the intersection ${\rm Lip}(X)\cap W^{1,p}_\rho$, with modulus of convexity independent of $Q$.}
\smallskip

Suppose $(Y,\mathcal{F})$ is a measurable space and, for each $y \in Y$, $\mathbb{R}^{n}$ is equipped with a semi inner product norm $\|\cdot\|_{y}$ such that $y\mapsto \|v\|_{y}$ is measurable for any $v \in \mathbb{R}^{n}$. By polarization, the map $y\mapsto \langle v,w\rangle_y$ is measurable for any $v,\,w\in \mathbb{R}^{n}$, where $\langle\cdot,\cdot \rangle_y$ denotes the induced semi inner product. Representing the semi inner product by a symmetric positive semidefinite matrix $y\mapsto A_y$ in the canonical basis of $\bbR^n$, it is clear that the entries of $A_y$ are also measurable.

It is well known (see for instance \cite{Bat}) that for any symmetric positive semidefinite matrix $A$ there exists a unique 
symmetric matrix $\sqrt{A}$ such that $\sqrt{A}\sqrt{A}=A$; in addition, the map $A\mapsto \sqrt{A}$ is continuous. As the composition of a continuous and a measurable map, the entries of $\sqrt{A_{y}}$ are measurable. Further, we can write $\langle v, w \rangle_{y} = (\sqrt{A_{y}}v)^{t}(\sqrt{A}_{y}w)$, which implies $\|v\|_{y}=|\sqrt{A_{y}}v|$ for all $v\in \mathbb{R}^{n}$, where $|\cdot |$ denotes the Euclidean norm on $\mathbb{R}^{n}$.

Using the discussion above, for each $x\in X$, we choose an $N \times N$ matrix $B_{x}$ such that $\|v\|_{x}=|B_{x}v|$ for all $v\in \mathbb{R}^{N}$.  Let $X_{1}$ and $X_{2}$ be two disjoint copies of $X$ supporting copies $\mm_{1}$ and $\mm_{2}$ of $\mm$ respectively. We define the following function $\widehat{f}\colon X_{1}\cup X_{2}\to \mathbb{R}^{N}$,
\begin{align*}
\widehat{f}(x):=\bigg \{
\begin{array}{rl}
(Q(x)^{1/p}f(x),0,\ldots,0) &x \in X_{1}\\
Q(x)^{1/p}B_{x}df(x)  &x \in X_{2}.\\
\end{array}
\end{align*}
Clearly $\widehat{f}$ is measurable. By using the equality $D_{x}(f)=\|\ud f(x)\|_x$ $\mm$-a.e. in $X$ it is simple to verify $\|\widehat{f}\|_{L^{p}(m_{1}+m_{2})}=\|f\|_{\mathrm{Ch},Q}$. Since the transformation $f\mapsto \widehat{f}$ is linear the usual uniform convexity of $L^{p}$ spaces implies uniform convexity of the norm $\|\cdot\|_{\mathrm{Ch},Q}$ on ${\rm Lip}(X)\cap W^{1,p}_\rho$ (with modulus independent of $Q$).

\smallskip
\textbf{Step 3.} \textit{The norm $\|\cdot\|_{\mathrm{Ch},\rho}$ is uniformly convex on $W^{1,p}_\rho$.}
\smallskip

It is an easy consequence of \eqref{propN}, \eqref{estimate} and locality for the weak gradient that if $g, h \in W^{1,p}_{\rho}$ and $g=h$ on a measurable set $E$ then $D_{x}g=D_{x}h$ for $\mm$-a.e. $x \in E$. We use this locality property throughout the sequel.

Given $\epsilon\in (0,1)$, let $\delta=\delta(\epsilon)\in (0,1)$ be given by the uniform convexity proved in the previous step. Suppose $f,\,g\in W^{1,p}_\rho$ satisfy $\|f\|_{\mathrm{Ch},\rho}=\|g\|_{\mathrm{Ch},\rho}=1$ and $\|f-g\|_{\mathrm{Ch},\rho}\geq \varepsilon$. Using Proposition~\ref{prop:propW11}, we can find an increasing family of 
bounded sets $E_n$ such that $\mm(X\setminus \cup_{n}E_{n})=0$ on which $f|_{E_n}$ and $g|_{E_n}$ are Lipschitz. Set $\rho_n(x):=\rho(x)\chi_{E_n}(x)$, where $\chi_{E_n}$ is the characteristic function of $E_n$. We first extend $f|_{E_n}$ and $g|_{E_n}$ to Lipschitz functions $f_n$ and $g_n$ on $X$ with bounded support. An easy argument using locality of the weak gradient and local integrability of $\rho$ shows that $f_{n}, g_{n} \in {\rm Lip}(X)\cap W^{1,p}_\rho$.

Next, let $\widetilde{f}_{n}:=f_{n}/z_{n}$ and $\widetilde{g}_{n}:=g_{n}/w_{n}$ for some scalars $z_{n}$ and $w_{n}$ such that
\[\|\widetilde{f}_{n}\|_{\mathrm{Ch},\rho_{n}}^{p}=\int_X (|{\widetilde{f}_n}(x)|^{p}+(D_{x}(\widetilde{f}_n))^{p}) \rho_{n}(x)\,\ud\mm(x)=1\]
and
\[\|\widetilde{g}_{n}\|_{\mathrm{Ch},\rho_{n}}^{p}=\int_X (|\widetilde{g}_n(x)|^{p}+(D_{x}(\widetilde{g}_n))^{p}) \rho_{n}(x)\,\ud\mm(x)=1.\]
Since
\[
z_n^p=\int_X (|f_n(x)|^{p}+(D_{x}(f_n))^{p}) \rho_{n}(x)\,\ud\mm(x)=
\int_{E_n} (|f(x)|^{p}+(D_{x}(f))^{p}) \rho(x)\,\ud\mm(x)\
\]
and $\|f\|_{\mathrm{Ch},\rho}=1$, the monotone convergence theorem yields $z_n\uparrow 1$; similarly we obtain $w_n\uparrow 1$.
In our choice of $f$ and $g$ we assumed that
\[\|f-g\|_{\mathrm{Ch},\rho}^{p}=\int_X (|f(x)-g(x)|^{p}+(D_{x}(f-g))^p) \rho(x)\,\ud\mm(x)>\varepsilon^{p}.\]
By Fatou's lemma, using locality of $D_{x}$ to justify pointwise convergence of the integrand, it follows that for $n$ sufficiently large,
\[\|\widetilde{f}_{n}-\widetilde{g}_{n}\|_{\mathrm{Ch},\rho_{n}}=\int_X (|\widetilde{f}_n(x)-\widetilde{g}_n(x)|^{p}+(D_{x}(\widetilde{f}_n - \widetilde{g}_n))^p) \rho_{n}(x)\,\ud\mm(x)>\varepsilon^{p}.\]
Hence, since $\widetilde{f}_{n}, \widetilde{g}_{n} \in {\rm Lip}(X)\cap W^{1,p}_\rho$, the uniform convexity of the norm $\|\cdot\|_{\mathrm{Ch},\rho_{n}}$ on ${\rm Lip}(X)\cap W^{1,p}_\rho$, proved in Step 2, gives
\[ \left\| \frac{\widetilde{f}_{n}+\widetilde{g}_{n}}{2}\right\|_{\mathrm{Ch},\rho_{n}}^{p} =
\int_X \left( \left|\frac{\widetilde{f}_{n}(x)+\widetilde{g}_{n}(x)}{2}\right|^p+\left(D_{x}\left(\frac{\widetilde{f}_{n}+\widetilde{g}_{n}}{2}\right)\right)^p \right) \rho_{n}(x)\ud \mm(x) \leq (1-\delta)^{p}.\]
By using locality of $D$ and the definitions of $f$ and $g$ we obtain
\[\int_{E_n} \left( \left|\frac{f(x)/z_n+g(x)/w_n}{2}\right|^p+\left(D_{x} \left(\frac{f/z_n+g/w_n}{2}\right)\right)^p \right)  \rho(x)\ud \mm(x) \leq (1-\delta)^{p}.\]
By letting $n\to\infty$ and using the dominated convergence theorem we obtain the inequality $\|(f+g)/2\|_{\mathrm{Ch},\rho}\leq (1-\delta)$. Hence the norm $\|\cdot\|_{\mathrm{Ch},\rho}$ is uniformly convex on $W^{1,p}_{\rho}$ and so the Banach space $W^{1,p}_{\rho}$ is reflexive.
\end{proof}

As previously stated (see \eqref{eq:Alica5}) another natural definition of weighted Sobolev space is provided by the closure of Lipschitz functions with respect to the weighted norm.

\begin{defi}[Weighted space $H^{1,p}_\rho(X,\ud,\mm)$]
Let $p>1$ and let $\rho:X\to [0,\infty]$ be a Borel function satisfying $\rho^{-1}\in L^{{1}/{(p-1)}}(\mm)$.
We define $H^{1,p}_{\rho}(X,d,\mm)$ as the closure of Lipschitz functions in $W^{1,p}_{\rho}$, namely
\[ H^{1,p}_{\rho}(X,d,\mm):=\overline{\mathrm{Lip}(X)\cap W^{1,p}_\rho}^{\|\cdot\|_{\rho}}.\]
\end{defi}

As for the $W$ space, we will adopt the shorter notation $H^{1,p}_\rho$.

We can now use the general identification Theorem~\ref{thH=W} and the reflexivity of $W^{1,p}_\rho$
 to prove our first main result, namely that $H^{1,p}_\rho$ coincides with the metric Sobolev space $W^{1,p}(X,\ud,\rho\mm)$ under 
 the $\PI_1$ assumption on $(X,\ud,\mm)$ and no extra assumption on the weight, besides local integrability of $\rho$ and global integrability of $\rho^{1/(1-p)}$.

\begin{teo} \label{thm:main1} Suppose $(X,\ud,\mm)$ is a $\PI_1$ metric measure space.
Let $p>1$ and suppose $\rho:X\to [0,\infty]$ is a Borel function satisfying $\rho\in L^1_{{\rm loc}}(\mm)$ and
$\rho^{-1}\in L^{{1}/{(p-1)}}(\mm)$. Then
$$
W^{1,p}(X,\ud,\rho\mm)=H^{1,p}_\rho.
$$
\end{teo}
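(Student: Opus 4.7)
The plan is to prove the two inclusions separately, leveraging the general identification $H=W$ (Theorem~\ref{thH=W}) applied to the locally finite Borel measure $\rho\mm$, together with the reflexivity of $W^{1,p}_\rho$ (Theorem~\ref{reflexivity}) and Mazur's lemma. The bridge between the two gradient notions is the comparison \eqref{eq:may242} with $p=1$ (available because $(X,\ud,\mm)$ is a $\PI_1$ space), which yields $|\nabla f|_w\leq|\nabla f|\leq c|\nabla f|_w$ $\mm$-a.e.\ for every $f\in\mathrm{Lip}(X)\cap W^{1,1}(X,\ud,\mm)$.

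For the inclusion $H^{1,p}_\rho\subseteq W^{1,p}(X,\ud,\rho\mm)$, I would pick $f\in H^{1,p}_\rho$ and Lipschitz $f_n\in W^{1,p}_\rho$ with $\|f_n-f\|_\rho\to 0$. Then $f_n\to f$ in $L^p(\rho\mm)$ and $|\nabla f_n|_w$ is bounded in $L^p(\rho\mm)$, so by the comparison above the slopes $|\nabla f_n|$ are also bounded in $L^p(\rho\mm)$. Since each slope is an upper gradient of $f_n$, the pair $(f_n,|\nabla f_n|)$ exhibits $f$ as an element of $H^{1,p}(X,\ud,\rho\mm)$, hence of $W^{1,p}(X,\ud,\rho\mm)$ by Theorem~\ref{thH=W} (applied to the locally finite reference measure $\rho\mm$).

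For the reverse inclusion $W^{1,p}(X,\ud,\rho\mm)\subseteq H^{1,p}_\rho$, I start from $f\in W^{1,p}(X,\ud,\rho\mm)=H^{1,p}(X,\ud,\rho\mm)$; the approximation property \eqref{eq:approximate} applied to $(X,\ud,\rho\mm)$ produces Lipschitz $g_n$ with $g_n\to f$ in $L^p(\rho\mm)$ and $|\nabla g_n|\to|\nabla f|_{p,\rho\mm}$ in $L^p(\rho\mm)$. Since the slope always dominates the $1$-weak gradient, $|\nabla g_n|_w\leq|\nabla g_n|$, so $\{g_n\}$ is bounded in $W^{1,p}_\rho$ (after multiplication by Lipschitz cutoffs, if necessary, to guarantee membership in $W^{1,1}(X,\ud,\mm)$; the hypotheses $\rho\in L^1_{\rm loc}(\mm)$ and $\rho^{-1}\in L^{1/(p-1)}(\mm)$ keep the modified sequence still convergent to $f$ in $L^p(\rho\mm)$ with controlled slopes). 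Reflexivity (Theorem~\ref{reflexivity}) then supplies a subsequence converging weakly in $W^{1,p}_\rho$ to some $\widetilde f$, and Mazur's lemma converts this into convex combinations $h_k$ of the $g_n$---still Lipschitz and still in $W^{1,p}_\rho$---converging to $\widetilde f$ strongly in $W^{1,p}_\rho$. Since $h_k\to f$ in $L^p(\rho\mm)$ as well, necessarily $\widetilde f=f$ $\rho\mm$-a.e., hence $\mm$-a.e.\ because $\rho>0$ $\mm$-a.e.\ (which follows from $\rho^{-1}\in L^{1/(p-1)}(\mm)$); therefore $f\in H^{1,p}_\rho$.

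The main obstacle is the second inclusion: one cannot directly extract a $W^{1,p}_\rho$-Cauchy subsequence from the $H$-type approximation of $(X,\ud,\rho\mm)$, because that approximation controls only slopes in $L^p(\rho\mm)$, while the $W^{1,p}_\rho$-norm is built from the $1$-weak gradient with respect to the \emph{unweighted} measure $\mm$. Reflexivity, combined with the continuous embedding \eqref{eq:embedding} of $W^{1,p}_\rho$ into $W^{1,1}(X,\ud,\mm)$ and hence into $L^p(\rho\mm)$, is precisely what allows the weak limit in $W^{1,p}_\rho$ to be identified with $f$ and then upgraded, via Mazur, to the strong approximation by Lipschitz functions demanded by the definition of $H^{1,p}_\rho$.
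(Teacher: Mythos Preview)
Your proposal is correct and follows essentially the same route as the paper. For $W^{1,p}(X,\ud,\rho\mm)\subseteq H^{1,p}_\rho$ the paper also takes a Lipschitz approximation from Theorem~\ref{thH=W}, uses $|\nabla g_n|_w\le|\nabla g_n|$ to get boundedness in $W^{1,p}_\rho$, and then invokes reflexivity; instead of Mazur it simply notes that the closed subspace $H^{1,p}_\rho$ is weakly closed, which is the same content. For $H^{1,p}_\rho\subseteq W^{1,p}(X,\ud,\rho\mm)$ the paper likewise uses \eqref{eq:may242} to pass from $|\nabla f_n|_w$ to $|\nabla f_n|$ (deriving a Cauchy condition rather than mere boundedness, but either suffices). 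Your cutoff step is harmless but unnecessary: $g_n\in L^p(\rho\mm)$ and $|\nabla g_n|\in L^p(\rho\mm)$ already force $g_n,|\nabla g_n|\in L^1(\mm)$ by H\"older and $\rho^{-1}\in L^{1/(p-1)}(\mm)$, so $g_n\in W^{1,1}(X,\ud,\mm)$ automatically.
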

\begin{proof} Let $f\in W^{1,p}(X,\ud,\rho\mm)$. By Theorem~\ref{thH=W} and the inequality
$|\nabla g|_w\leq |\nabla g|$ for $g$ Lipschitz,
we can approximate $f$ in $L^p(\rho\mm)$ by
functions $f_n\in \mathrm{Lip}(X)\cap L^p(\rho\mm)$ with $\|f_n\|_\rho$ uniformly bounded
and even $|\nabla f_n|\to |\nabla f|_{p,\rho\mm}$ in $L^p(\rho\mm)$. By reflexivity we have that
$f_n$ weakly converge to $f$ in $W^{1,p}_\rho$, and since $H^{1,p}_\rho$ by definition is a closed subspace it follows
that $f\in H^{1,p}_\rho$ as well. In addition, the weak lower semicontinuity of the norm gives
\begin{equation}\label{eq:first_piece}
\int_X (|f|^p+|\nabla f|_w^p)\rho\ud\mm\leq
\int_X (|f|^p+|\nabla f|_{p,\rho\mm}^p)\rho\ud\mm.
\end{equation}

Conversely, let $f\in H^{1,p}_\rho$ and let $f_n\in \mathrm{Lip}(X)\cap W^{1,p}_\rho$ be convergent to $f$
in $W^{1,p}_\rho$ norm. Using \eqref{eq:may242} of Proposition~\ref{prop:propW11} we obtain that
$$
\lim_{n,\,m\to\infty}\int_X |\nabla (f_n-f_m)|^p\rho\ud\mm=0.
$$
It follows that $\int |\nabla f_n|^p\ud\rho\mm$ is uniformly bounded, therefore one obtains
$f\in H^{1,p}(X,\ud,\rho\mm)$ and therefore, by Theorem \ref{thH=W}, $f\in W^{1,p}(X,\ud,\rho\mm)$.
\end{proof}

\begin{oss}[$W^{1,p}(X,\ud,\rho\mm)$ and $H^{1,p}_\rho$ are isometric]{\rm
The second part of the proof of Theorem~\ref{thm:main1} can be improved if we use the finer information
(see Theorem~12.5.1 in \cite{ShaK}, while \cite{Che} covered only the case $p>1$) that
\begin{equation}\label{eq:second_piece}
|\nabla f|=|\nabla f|_{w}\qquad\text{$\mm$-a.e. in $X$}
\end{equation} 
for all $f\in \mathrm{Lip}(X)\cap W^{1,1}(X,\ud,\mm)$, under the $\PI_1$ assumption (recall $|\nabla f|_w$ stands for $|\nabla f|_{1,\mm}$). 
Indeed, using \eqref{eq:second_piece} one can get 
\begin{equation}\label{eq:third_piece}
\int_X |\nabla f|_{p,\rho\mm}^p\rho\ud\mm\leq \int_X |\nabla f|_w^p\rho\ud\mm
\end{equation}
which, combined with \eqref{eq:first_piece}, gives that the spaces are isometric.
}\end{oss}

\section{Identification of Weighted Sobolev Spaces}\label{sec:4}

In this section we prove that for $p>1$, under certain assumptions on the space $(X,\ud,\mm)$ and on the weight $\rho$, 
the weighted Sobolev spaces $W^{1,p}_\rho$ and $H^{1,p}_\rho$ coincide. Unless otherwise stated, all integrals are with respect to $\mm$. Our 
second main result is the following.

\begin{teo}[Identification of weighted and metric Sobolev spaces]\label{mainthm}
Suppose $(X,\ud,\mm)$ is a $\PI_1$ metric measure space.
Let $p>1$ and $\rho\colon X \to [0,\infty]$ be such that $\rho \in L^1_{\mathrm{loc}}(\mm)$, $\rho^{-1} \in L^{{1}/{(p-1)}}(\mm)$ and
\[L_{\rho}:=\liminf_{n \to \infty} \frac{1}{n^{p}}\left(\int_{X}\rho^{n}\ud\mm\right)^{1/n}\left(\int_{X}\rho^{-n}\ud\mm\right)^{1/n}<\infty.\]
Then $W^{1,p}(X,\ud,\rho\mm)=W^{1,p}_\rho$ and, in particular, $H^{1,p}_{\rho}= W^{1,p}_{\rho}$.
\end{teo}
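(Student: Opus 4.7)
The plan is to establish the single nontrivial inclusion $W^{1,p}_\rho \subseteq W^{1,p}(X,\ud,\rho\mm)$, since Theorem~\ref{thm:main1} already gives $W^{1,p}(X,\ud,\rho\mm) = H^{1,p}_\rho \subseteq W^{1,p}_\rho$. Equivalently, the task is density of Lipschitz functions in $W^{1,p}_\rho$ under the hypothesis $L_\rho < \infty$; the ``in particular'' assertion $H^{1,p}_\rho = W^{1,p}_\rho$ is then immediate.

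Given $f \in W^{1,p}_\rho$, I would first truncate via $T_N f := (-N) \vee f \wedge N$, which is a contraction on $W^{1,p}_\rho$ by locality of the $1$-weak gradient and converges to $f$ by dominated convergence on the densities $|f|^p \rho$ and $|\nabla f|_w^p \rho$; hence WLOG $|f| \leq M$. Next, I upgrade the integrability of $|\nabla f|_w$: the asymptotic hypothesis supplies a subsequence $n_j \uparrow \infty$ with $\bigl(\int \rho^{n_j}\,\ud\mm\bigr)^{1/n_j}\bigl(\int \rho^{-n_j}\,\ud\mm\bigr)^{1/n_j} \leq 2 L_\rho n_j^p$. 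In particular $\rho^{-n_j} \in L^1(\mm)$, and H\"older's inequality yields $f \in W^{1,q_j}(X,\ud,\mm)$ for $q_j := n_j p/(n_j+1) \uparrow p$, with a quantitative norm bound expressed in terms of $\int |\nabla f|_w^p \rho\,\ud\mm$ and $\int \rho^{-n_j}\,\ud\mm$.

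I would then apply the Lusin-type approximation of Proposition~\ref{prop:propW11} at exponent $q_j$ to produce Lipschitz functions $f_{j,k}$ with $|f_{j,k}| \leq M$ (after further truncation), $\mathrm{Lip}(f_{j,k}) \leq C k$, and $f = f_{j,k}$ on Borel sets $E_{j,k}$ satisfying $\mm(X \setminus E_{j,k}) \leq C \|f\|_{W^{1,q_j}}^{q_j}/k^{q_j}$. By locality of the weak gradient both $\int |f - f_{j,k}|^p \rho\,\ud\mm$ and $\int |\nabla(f - f_{j,k})|_w^p \rho\,\ud\mm$ collapse to integrals over $X \setminus E_{j,k}$, and the triangle inequality splits them into a tail $\int_{X \setminus E_{j,k}} |\nabla f|_w^p \rho\,\ud\mm$ (which vanishes by dominated convergence as the tail set shrinks) plus a bulk term of order $k^p \int_{X \setminus E_{j,k}} \rho\,\ud\mm$.

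The hard part, and the step that mirrors Zhikov's computation, is making the bulk term tend to zero. H\"older with exponent $n_j$, the measure bound on $X \setminus E_{j,k}$, and the quantitative bound on $\|f\|_{W^{1,q_j}}$ leave an expression in which, after invoking the asymptotic inequality $\bigl(\int\rho^{n_j}\bigr)^{1/n_j}\bigl(\int\rho^{-n_j}\bigr)^{1/n_j} \leq 2 L_\rho n_j^p$, the factors of $\int \rho^{\pm n_j}$ almost cancel against the allowance $n_j^p$; the residual must then be driven to zero by a carefully coupled diagonal choice $n_j, k \to \infty$. This is precisely the place where Zhikov's proof exploits differentiability of the $L^p$-energy to extract the correct scaling, and the adaptation to the nonsmooth setting consists in replacing that calculus step with the direct H\"older estimate above and organizing the diagonal limit accordingly. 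Once this is carried out, the resulting sequence of Lipschitz functions converges to $f$ in $W^{1,p}_\rho$, placing $f \in H^{1,p}_\rho$ and completing the proof.
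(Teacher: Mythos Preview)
Your direct Lusin-approximation scheme does not close, and the gap is precisely in the ``hard part'' you flag but do not carry out. After H\"older with exponent $n_j$ the bulk term is bounded (up to constants depending on $f$) by
\[
k^{\,p-q_j(n_j-1)/n_j}\Bigl(\int_X\rho^{n_j}\Bigr)^{1/n_j}\Bigl(\int_X\rho^{-n_j}\Bigr)^{(n_j-1)/(n_j(n_j+1))},
\]
and with $q_j=n_jp/(n_j+1)$ the exponent of $k$ equals $2p/(n_j+1)>0$. Writing $a_j=\|\rho\|_{L^{n_j}}$, $b_j=\|\rho^{-1}\|_{L^{n_j}}$ and invoking $a_jb_j\leq 2L_\rho n_j^{p}$, the remaining factor is $a_j b_j^{(n_j-1)/(n_j+1)}=(a_jb_j)\,b_j^{-2/(n_j+1)}$, hence the bulk is at best of order $k^{2p/(n_j+1)}\,n_j^{p}\,b_j^{-2/(n_j+1)}$. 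For any weight with $\rho\geq c>0$ (so $b_j$ bounded) this is $\gtrsim n_j^{p}$ for every $k\geq 1$, and no diagonal coupling drives it to zero. Meanwhile the tail term forces $k\to\infty$. So the scheme cannot produce a Lipschitz approximant in $W^{1,p}_\rho$-norm from these estimates alone.

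The paper's argument is structurally different and supplies exactly the missing gain. One argues by contradiction: reflexivity of $W^{1,p}_\rho$ provides $u\in W^{1,p}_\rho\setminus H^{1,p}_\rho$ minimizing the distance to $H^{1,p}_\rho$, and the first variation of $\|u-tw\|_\rho^p$ at $t=0$, with $w=u_\lambda$ the Lipschitz truncation, yields the orthogonality-type inequality
\[
\int_{F_\lambda}(|u|^p+|\nabla u|_w^p)\rho\,\ud\mm\leq C\lambda\int_{X\setminus F_\lambda}(|u|^{p-1}+|\nabla u|_w^{p-1})\rho\,\ud\mm.
\]
This inequality is what your direct construction lacks; it trades a full power $p$ on the good set for a power $p-1$ on the bad set, at the cost of one factor of $\lambda$. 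One then integrates it over all scales $\lambda$ against $-f'(\lambda)$ with $f(\lambda)=\lambda^{(1-p)\varepsilon}$ (a Cavalieri argument), and this integration manufactures a small prefactor $\varepsilon$. After H\"older and the maximal estimate one arrives at $\|u\|_\rho^{p}\leq C L_\rho^{1/p}\|u\|_\rho^{p}$, i.e.\ $1\leq C L_\rho$ with a structural $C$. The contradiction is then obtained not by letting anything tend to zero in this inequality, but by a second, separate device: one replaces $\rho$ by an equivalent weight $\rho_t$ (same $H$ and $W$ spaces) for which $L_{\rho_t}$ is as small as one likes, and reruns the whole argument with $\rho_t$. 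Neither the variational orthogonality step nor the weight-modification step has a counterpart in your proposal, and both are essential.
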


Suppose that the hypotheses of Theorem \ref{mainthm} hold. Since $L_{\rho}<\infty$, there exists $N\in \mathbb{N}$ such that $\|\rho\|_{L^{N}(\mm)}\|\rho^{-1}\|_{L^{N}(\mm)}<\infty$. The assumptions $\rho \in L^1_{\mathrm{loc}}(\mm)$ and $\rho^{-1} \in L^{{1}/{(p-1)}}(\mm)$ imply that $\rho$ is not identically $0$ or $\infty$. Hence $\|\rho\|_{L^{N}(\mm)}<\infty$ and $\|\rho^{-1}\|_{L^{N}(\mm)}<\infty$. These imply $\mm \{\rho \geq 1\}$ and $\mm \{\rho \leq 1\}$ respectively; hence $\mm(X)<\infty$. Further, the integrability $\rho \in L^{N}(\mm)$ (for some $N$) and $\mm(X)<\infty$ imply $\rho \in L^{1}(\mm)$. We use these facts freely in what follows.

We already know that we can identify $H^{1,p}_\rho$ with $W^{1,p}(X,\ud,\rho\mm)$, thanks to Theorem~\ref{thm:main1}.
Hence, we argue by contradiction, assuming that $H^{1,p}_{\rho}\subsetneq W^{1,p}_{\rho}$, and we derive a contradiction. Since $H^{1,p}_{\rho}$ is a closed subspace of the reflexive space $W^{1,p}_{\rho}$ (Theorem~\ref{reflexivity}) there exists an element $u\in W^{1,p}_{\rho} \setminus H^{1,p}_{\rho}$ such that $\|u+v\|_{\rho} \geq \|u\|_{\rho}$ for any $v \in H^{1,p}_{\rho}$
(it suffices, given $z\in W^{1,p}_\rho\setminus H^{1,p}_\rho$, to minimize $\|z-h\|_\rho$ as $h$ runs in $H^{1,p}_\rho$
and then define $u=z-h$, where $h$ is a minimizer.
Now, suppose $v$ is of the form $v=-tw$ for $t \in (0,1)$, with $w \in H^{1,p}_{\rho}$ and $w=u$ on a Borel set $E \subset X$. Then we obtain,
\begin{multline*} \int_{E} (|u|^p+|\nabla u|_{w}^{p})\rho \ud\mm+ \int_{X\setminus E} (|u|^p+|\nabla u|_{w}^{p})\rho\ud\mm\\
\leq (1-t)^{p}\int_{E} (|u|^p+|\nabla u|_{w}^{p})\rho \ud\mm + \int_{X\setminus E}(|u-tw|^{p}+|\nabla(u-tw)|_{w}^{p})\rho \ud\mm\end{multline*}
and hence
\begin{multline*} \frac{(1-(1-t)^{p})}{t}\int_{E}(|u|^p+|\nabla u|_{w}^{p})\rho \ud\mm\\ 
\leq \int_{X\setminus E} \frac{(|u|+t|w|)^{p}-|u|^{p}+(|\nabla u|_{w}+t|\nabla w|_{w})^{p}-|\nabla u|_{w}^{p}}{t}\rho \ud\mm.\end{multline*}
By letting $t \to 0$ and using the dominated convergence theorem we obtain
\begin{equation}\label{orthogonality}
\int_{E} (|u|^{p}+|\nabla u|_{w}^{p})\rho \ud\mm\leq \int_{X\setminus E}(|u|^{p-1}|w|+|\nabla u|_{w}^{p-1}|\nabla w|_{w})\rho \ud\mm.
\end{equation}

To apply \eqref{orthogonality} we need to use $u\in W^{1,p}_{\rho}$ to construct an appropriate test function in $H^{1,p}_{\rho}$. To do this,
as in the proof of Proposition~\ref{prop:propW11}, we use a maximal operator estimate to obtain Lipschitz bounds on the restriction of $u$ to a smaller subset and then extend this restriction to a Lipschitz map on $X$. 

Now let $u\in W^{1,p}_{\rho} \setminus H^{1,p}_{\rho}$ such that $\|u+v\|_{\rho} \geq \|u\|_{\rho}$ for any $v \in H^{1,p}_{\rho}$ and set 
\[g:=\max \{|u|,M(|\nabla u|_{w})\},\]
where $M$ is the maximal operator w.r.t. $\mm$, defined in \eqref{eq:def_maximal}.
We also define the set
\[F_{\lambda}:=\left\{x\in X:\ \text{$x$ is a Lebesgue point of $u$ and $g(x)\leq \lambda$}\right\}.\]
Arguing as in the proof of Proposition~\ref{prop:propW11} we obtain a Lipschitz function $u_\lambda$ with $|u_\lambda|\leq\lambda$, ${\rm Lip}(u_\lambda)\leq C\lambda$, and equal to $u$ $\mm$-a.e. in $F_\lambda$. Since $\mm(X)<\infty$ and $\rho\in L^{1}(\mm)$, we deduce $u_{\lambda} \in H^{1,p}_\rho$.

Now we apply \eqref{orthogonality} with $w=u_{\lambda}$ and $E=F_{\lambda}$ to obtain
\begin{equation}\label{orthogestimate}
\int_{F_{\lambda}} (|u|^{p}+|\nabla u|_{w}^{p})\rho \ud\mm\leq C\lambda \int_{X\setminus F_{\lambda}}(|u|^{p-1}+|\nabla u|_{w}^{p-1})\rho \ud\mm.
\end{equation}

Next, we prove the following estimate:

\begin{prop}\label{distest}
Let $f\colon (0,\infty) \to (0,\infty)$ be a continuously differentiable and strictly decreasing function such that $f(\lambda)\to 0$
as $\lambda\to \infty$ and $-\int_0^t\lambda f'(\lambda)\ud\lambda<\infty$ for all $t>0$. Then, under assumption
\eqref{orthogestimate}, there exists a constant $C>0$ such that
\begin{align*}
\int_X f(g)(|u|^p+|\nabla u|^p_{w})\rho \ud\mm\leq C \int_{X} \Phi(g)(|u|^{p-1}+|\nabla u|^{p-1}_{w})\rho 
\ud\mm\end{align*}
with $\Phi(t)=-\int_0^t\lambda f'(\lambda)\ud\lambda$.
\end{prop}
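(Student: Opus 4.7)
The plan is a layer-cake / coarea argument in the variable $\lambda$: the estimate \eqref{orthogestimate} is essentially one inequality per sublevel set $F_\lambda=\{g\leq\lambda\}$, and the conclusion is obtained by integrating these inequalities against the non-negative measure $(-f'(\lambda))\,\ud\lambda$ on $(0,\infty)$ and reversing the order of integration. The key identity, valid for $\mm$-a.e.\ $x$ (at every Lebesgue point of $u$ where $g(x)<\infty$), is
\[
f(g(x))=-\int_{g(x)}^{\infty} f'(\lambda)\,\ud\lambda=\int_0^\infty \mathbf{1}_{F_\lambda}(x)\,(-f'(\lambda))\,\ud\lambda,
\]
which comes from the fundamental theorem of calculus, the decay $f(+\infty)=0$, and the positivity of $-f'$. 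Note that $g<\infty$ $\mm$-a.e.\ because $|u|$ is $\mm$-a.e.\ finite and $|\nabla u|_w\in L^1(\mm)$ (so $M(|\nabla u|_w)<\infty$ $\mm$-a.e.\ by the weak $(1,1)$ bound \eqref{eq:maximal_estimates1}), while the non-Lebesgue points of $u$ form an $\mm$-negligible set and can be ignored.

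First I would substitute the identity above into the left-hand side and apply Tonelli's theorem (all integrands are non-negative), obtaining
\[
\int_X f(g)(|u|^p+|\nabla u|^p_w)\rho\,\ud\mm=\int_0^\infty(-f'(\lambda))\int_{F_\lambda}(|u|^p+|\nabla u|^p_w)\rho\,\ud\mm\,\ud\lambda.
\]
Next I would insert \eqref{orthogestimate} inside the outer integral; since $-f'(\lambda)\geq 0$ the inequality is preserved and produces the bound
\[
\leq C\int_0^\infty\lambda(-f'(\lambda))\int_{X\setminus F_\lambda}(|u|^{p-1}+|\nabla u|^{p-1}_w)\rho\,\ud\mm\,\ud\lambda.
\]
A second application of Tonelli pushes the $\ud\mm$ integral outside, and the inner integral in $\lambda$ becomes (using $\mathbf{1}_{X\setminus F_\lambda}(x)=\mathbf{1}_{\{\lambda<g(x)\}}$ up to $\mm$-null sets)
\[
\int_0^{g(x)}\lambda(-f'(\lambda))\,\ud\lambda=\Phi(g(x)),
\]
which yields exactly the right-hand side of the claim, with the same constant $C$.

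There is no real analytic obstacle here: the work has already been done in \eqref{orthogestimate}, and the present statement is just the result of testing that pointwise-in-$\lambda$ estimate against a well-chosen weight. The only points that deserve care are the $\mm$-a.e.\ finiteness of $g$, which underpins the layer-cake representation of $f(g)$, and the fact that the hypothesis $-\int_0^t\lambda f'(\lambda)\,\ud\lambda<\infty$ for every $t>0$ guarantees that $\Phi(t)$ is finite for every $t$ and hence that both Tonelli exchanges are meaningful; the monotonicity of $\Phi$ also ensures that the final right-hand side is $\mm$-integrable whenever the left-hand side is, so the inequality is not vacuous.
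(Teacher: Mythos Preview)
Your proof is correct and follows essentially the same route as the paper: both arguments integrate the family of inequalities \eqref{orthogestimate} against the nonnegative density $-f'(\lambda)$ and then swap the order of integration. The paper phrases the Fubini step via Cavalieri's formula (with the change of variable $\lambda=f^{-1}(t)$) rather than via the fundamental-theorem-of-calculus identity $f(g)=\int_0^\infty\mathbf 1_{F_\lambda}(-f'(\lambda))\,\ud\lambda$, but this is only a cosmetic difference.
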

\begin{proof} First of all we notice that the statement makes sense, since $g>0$ $\mm$-a.e. on the
set where $|u|^p+|\nabla u|_w^p$ is positive (therefore, understanding the integrand as being null on $\{g=0\}$).
As in  
\cite[Lemma 1]{Zhi}, we can apply Cavalieri's formula $\int \psi\ud\mu=\int_0^\infty \mu(\{\psi>t\})\ud t$
with $\psi$ Borel nonnegative and $\mu$ a finite Borel measure. For $k\in L^1(\mm)$, choosing
$\mu= k\mm$ and $\psi=f(g)$, $\psi=\Phi(g)$ and using the change of variable $\lambda=f^{-1}(t)$ yields
\begin{align}\label{stima1}
&\int_{X} f(g)k= -\int_{0}^{\infty} f'(\lambda)\int_{F_{\lambda}} k\ud\lambda,\\
\label{stima2}
&\int_{X} \Phi(g)k= \int_{0}^{\infty}\Phi'(\lambda)\int_{X\setminus F_{\lambda}} k\ud\lambda
\end{align}
for all $k\in L^1(\mm)$ nonnegative, and eventually for any nonnegative Borel $k$.
Now observe that multiplying inequality \eqref{orthogestimate} by $-f'(\lambda)$ (recall that $-f'(\lambda)\geq 0$ in $(0,\infty)$) and integrating from $0$ to $\infty$ we get 
\begin{align}\label{stima3}
&-\int_0^{\infty}f'(\lambda)\int_{F_{\lambda}}( |u|^p+|\nabla u|_{w}^{p})\rho \ud\lambda\\
& \qquad \leq -C\int_{0}^{\infty}\lambda f'(\lambda)\int_{X\setminus F_{\lambda}}(|u|^{p-1}+|\nabla u|^{p-1}_{w})\rho \ud\lambda \nonumber \\
\nonumber
& \qquad =C\int_{0}^\infty\Phi'(\lambda)\int_{X\setminus F_{\lambda}}(|u|^{p-1}+|\nabla u|_{w}^{p-1})\rho \ud\lambda.
\end{align}
By applying \eqref{stima1} to \eqref{stima3} with $k=(|u|^p+|\nabla u|_{w}^p)\rho$ we get
\begin{align}\label{stima4}
\int_{X}f(g)(|u|^p+|\nabla u|^p_{w})\rho\leq C\int_{0}^\infty\Phi'(\lambda)\int_{X\setminus F_{\lambda}}(|u|^{p-1}+|\nabla u|_{w}^{p-1})\rho\ud\lambda.
\end{align}
Further, we choose $k=(|u|^{p-1}+|\nabla u|_{w}^{p-1}) \rho$ and apply inequality \eqref{stima2} to \eqref{stima4} to obtain the thesis.
\end{proof}

We modify the estimate from Proposition \ref{distest} by using H\"older's inequality for the measure $\rho \mm$; for any non negative Borel function $G$ on $X$ we have,
\[\int_X \Phi(g)G^{p-1} \rho \leq \left(\int_X G^{p}\rho \right)^{1/p'}\left( \int_X\Phi(g)^{p} \rho \right)^{1/p}\]
where $1/p+1/p'=1$. By applying Proposition \ref{distest} then using the above inequality with $G=|u|$ and $G=|\nabla u|_{w}$ we deduce
\begin{align}\label{modifiedestimate}
& \int_X f(g)(|u|^{p}+|\nabla u|_{w}^{p})\rho\\ \nonumber
&\qquad \leq C\left( \left(\int_X |u|^{p}\rho\right)^{1/p'} + \left(\int_X |\nabla u|_{w}^{p}\rho\right)^{1/p'} \right)\left( \int_X \Phi(g)^{p} \rho \right)^{1/p}\\ \nonumber
&\qquad \leq C\left(\int_X (|u|^{p} + |\nabla u|_{w}^{p})\rho\right)^{1/p'}\left( \int_X \Phi(g)^{p} \rho \right)^{1/p}\\ \nonumber
&\qquad = C\|u\|_{\rho}^{p/p'}\left( \int_X \Phi(g)^{p} \rho \right)^{1/p}.
\end{align}

Now we fix $p^*\in (1,p)$ and choose $\varepsilon>0$ such that $\varepsilon< \min\Big\{\frac{1}{2(p-1)}, \frac{1}{p-1}\Big(1-\frac{p^*}{p}\Big)\Big\}$. Therefore,
$(p-1)\varepsilon<1/2$ and $p_\varepsilon:=p(1+(1-p)\varepsilon)>p^*$.
Our next goal is to prove the inequality
\begin{equation}\label{stringA}
\int_X g^{(1-p)\varepsilon}(|u|^{p}+|\nabla u|_{w}^{p})\rho \leq C\varepsilon \|u\|_{\rho}^{p/p'}\left( \int_X g^{p_\varepsilon} \rho\right)^{1/p}.
\end{equation}
 
In order to prove \eqref{stringA}, let $f(\lambda):=\lambda^{(1-p)\varepsilon}$, so that
\[\Phi(\lambda):=\frac{(p-1)\varepsilon}{1-(p-1)\varepsilon}\lambda^{1-(p-1)\varepsilon}.\]
Now we observe that
$$
\|\Phi \circ g\|_{L^{p}(\rho\mm)}^p= \frac{(p-1)^{p}\varepsilon^{p}}{(1-(p-1)\varepsilon)^{p}}\int_X g^{p(1-(p-1)\varepsilon)}\rho
\leq 2^p(p-1)^p\varepsilon^p\int_X g^{p(1-(p-1)\varepsilon)}\rho,
$$
by our choice of $\varepsilon$.
Hence, by applying \eqref{modifiedestimate} with our choice of $f$ and $\Phi$ we obtain
\eqref{stringA}.

Now, recalling that $g=\max\{|u|,M(|\nabla u|_{w})\}$ and using the triangle inequality, we estimate
\begin{equation}\label{stringB}
\|g\|_{L^{p_\varepsilon}(\rho \mm)}^{p_\varepsilon/p} \leq (\|u\|_{L^{p_\varepsilon}(\rho \mm)}+\|M(|\nabla u|_{w})\|_{L^{p_\varepsilon}(\rho \mm)})^{p_\varepsilon/p}.
\end{equation}
We will use H\"older's inequality and boundedness of the maximal operator to bound the right hand side in terms of $\|u\|_{\rho}$ and $\rho$. 
Notice that the constants $C$ appearing in the estimates below are independent of $\epsilon$, since we are going to apply the maximal estimates
with exponent $p_\varepsilon r$ and $p_\varepsilon r>p^*$, by our choice of $\varepsilon$.
We handle 
$\|u\|_{L^{p_\varepsilon}(\rho \mm)}$ and $\|M(|\nabla u|_{w})\|_{L^{p_\varepsilon}(\rho \mm)}$ separately but with a similar argument; we apply H\"older's inequality twice with the following exponents:
\[r=r_\varepsilon:=\frac{2+(1-p)\varepsilon}{2+2(1-p)\varepsilon},\qquad s=s_\varepsilon:=\frac{2}{2+(1-p)\varepsilon}.\]
It is easy to see that $r, \,s>1$ and that the conjugate H\"older exponents $r',\,s'$ are respectively given by
\[r'=\frac{2+(1-p)\varepsilon}{(p-1)\varepsilon},\qquad
s'=\frac{2}{(p-1)\varepsilon}.\]
Furthermore, these exponents satisfy the equations $p_\varepsilon rs=p$ and $s'=r's$. Now, let us derive the inequalities for $M(|\nabla u|_{w})$; the case of $|u|$ is similar but easier, since we don't need to use boundedness of the maximal operator. 

\begin{align}\label{maxest}
\int_X M(|\nabla u|_{w})^{p_\varepsilon} \rho &\leq \left( \int_X M(|\nabla u|_{w})^{p_\varepsilon r} \right)^{1/r} \left( \int_X \rho^{r'} \right)^{1/r'}\\
\nonumber
&\leq C \left( \int_X |\nabla u|_{w}^{p_\varepsilon r}\rho^{1/s}\rho^{-1/s}\right)^{1/r}\left( \int_X \rho^{r'} \right)^{1/r'}\\
\nonumber
&\leq C \left( \int_X |\nabla u|_{w}^{p_\varepsilon rs}\rho \right)^{1/(rs)} \left( \int_X \rho^{r'}\right)^{1/r'} \left( \int_X \rho^{-s'/s} \right)^{1/(rs')}\\
\nonumber
&\leq C\|u\|_{\rho}^{p/(rs)} \left( \int_X \rho^{r'} \right)^{1/r'} \left( \int_X \rho^{-r'}\right)^{1/(rs')} .
\end{align}
Similarly we obtain
\[ \int |u|^{p_\varepsilon} \rho \leq C\|u\|_{\rho}^{p/(rs)} \left( \int_X \rho^{r'} \right)^{1/r'} \left( \int_X \rho^{-r'}\right)^{1/rs'} .\]
Hence we have
\begin{equation}\label{stringC}
(\|u\|_{L^{p_\varepsilon}(\rho \mm)}+\|M(|\nabla u|_{w})\|_{L^{p_\varepsilon}(\rho \mm)})^{p_\varepsilon/p} \leq C\|u\|_{\rho}^{1/(rs)} 
\left( \int_X \rho^{r'} \right)^{1/(pr')} \left( \int_X \rho^{-r'}\right)^{1/(prs')}.
\end{equation}

By combining our estimates \eqref{stringA}, \eqref{stringB} and \eqref{stringC} we obtain
\begin{align}\label{limitfin}
\int g^{(1-p)\varepsilon}(|u|^{p}+|\nabla u|_{w}^{p})\rho \leq C\varepsilon \|u\|_{\rho}^{\frac{p}{p'}+
\frac{1}{rs}}\left( \int_X \rho^{r'}\right)^{1/pr'}\left( \int_X \rho^{-r'}\right)^{1/prs'}.
\end{align}

As $\varepsilon \to 0$ Fatou's lemma gives
\begin{align}\label{Fat}
\int_X (|u|^p+|\nabla u|_{w}^{p}) \rho \leq \liminf_{\varepsilon \downarrow 0} \int_X g^{(1-p)\varepsilon}(|u|^{p}+|\nabla u|_{w}^{p})\rho.
\end{align}
Therefore in order to estimate $\int_X (|u|^p+|\nabla u|_{w}^{p}) \rho$ from above we can estimate
the right hand side of \eqref{limitfin} as $\varepsilon \downarrow 0$ (notice that $r_\varepsilon,\,s_\varepsilon\downarrow 1$, while
$p_\varepsilon\uparrow p$) in the following lemma.

\begin{lem}
The following inequality holds:
\begin{equation}\label{limit}
\liminf_{\varepsilon \downarrow 0} \varepsilon^p \left( \int_{X} \rho^{r'}\right)^{1/r'}\left( \int_{X} \rho^{-r'}\right)^{1/(rs')}\leq \frac{2^p}{(p-1)^p}L_{\rho}
\end{equation}
where $L_{\rho}$ is defined as in Theorem~\ref{mainthm}.
\end{lem}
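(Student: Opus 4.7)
The plan is to parameterize by $n:=r'(\varepsilon)$ and thereby relate the $\liminf$ over $\varepsilon$ to the $\liminf$ over integers in the definition of $L_\rho$. From the formulas for $r,\,s,\,r',\,s'$ recalled just before the statement, a short computation gives $(p-1)\varepsilon=2/(n+1)$, $s'=n+1$, $r=n/(n-1)$ and $rs'=n(n+1)/(n-1)$. In particular $\varepsilon^p=2^p/((p-1)^p(n+1)^p)$ and
\[
\frac{1}{rs'}=\frac{n-1}{n(n+1)}=\frac{1}{n}-\frac{2}{n(n+1)}.
\]
Writing $I_n:=\int_X\rho^n\ud\mm$ and $J_n:=\int_X\rho^{-n}\ud\mm$, the quantity inside the $\liminf$ in \eqref{limit} rewrites as
\[
\varepsilon^p I_n^{1/r'}J_n^{1/(rs')}=\frac{2^p}{(p-1)^p}\cdot\Bigl(\frac{n}{n+1}\Bigr)^p\cdot\frac{I_n^{1/n}J_n^{1/n}}{n^p}\cdot J_n^{-2/(n(n+1))}.
\]

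Next I would restrict $\varepsilon$ to the subsequence $\varepsilon_k:=2/((p-1)(n_k+1))$, where $n_k\in\bbN$ is chosen so that $n_k^{-p}I_{n_k}^{1/n_k}J_{n_k}^{1/n_k}\to L_\rho$, which is possible by definition of $L_\rho$. Along this subsequence $r'(\varepsilon_k)=n_k$, and the first two factors on the right-hand side tend to $2^p/(p-1)^p$ and $1$ respectively, while the third factor tends to $L_\rho$. Since $\liminf_{\varepsilon\downarrow 0}(\cdots)\leq\lim_k(\cdots)$ along any sequence, the lemma follows provided I can show that the last factor $J_n^{-2/(n(n+1))}$ is bounded by a sequence tending to $1$ as $n\to\infty$.

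The main technical obstacle is thus to control $J_n^{-2/(n(n+1))}$. I would do this by proving a uniform positive lower bound on $J_n^{1/n}$, using Jensen's inequality for the probability measure $\mm/\mm(X)$ (recall that $\mm(X)<\infty$ was established just after the statement of Theorem~\ref{mainthm}) with the convex function $\phi(t)=t^{n(p-1)}$ applied to $f=\rho^{-1/(p-1)}$. This gives, for $n\geq 1/(p-1)$,
\[
J_n^{1/n}\geq\mm(X)^{1/n-(p-1)}\Bigl(\int_X\rho^{-1/(p-1)}\ud\mm\Bigr)^{p-1}.
\]
The right-hand side converges as $n\to\infty$ to the strictly positive constant $c:=\mm(X)^{-(p-1)}(\int_X\rho^{-1/(p-1)}\ud\mm)^{p-1}$, finite and nonzero by the standing hypothesis $\rho^{-1}\in L^{1/(p-1)}(\mm)$ together with $\rho<\infty$ on a set of positive measure (a consequence of $\rho\in L^1(\mm)$). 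Therefore $J_n\geq (c/2)^n$ for $n$ large, which yields $J_n^{-2/(n(n+1))}\leq\max(1,(c/2)^{-2/(n+1)})\to 1$.

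Once the correction factor is controlled, the rest is algebraic bookkeeping: evaluating the displayed identity along $\varepsilon_k$ gives $\lim_k\varepsilon_k^p I_{n_k}^{1/r'}J_{n_k}^{1/(rs')}=(2^p/(p-1)^p)L_\rho$, hence $\liminf_{\varepsilon\downarrow 0}$ of the same expression is bounded above by this value, which is exactly the inequality \eqref{limit}. The delicate point, as emphasized, is the slight mismatch between the exponent $1/(rs')$ that organically appears and the exponent $1/n$ appearing in the definition of $L_\rho$; the key observation is that the exponent $2/(n(n+1))$ of the discrepancy is small enough that only a crude exponential lower bound $J_n\geq c^n$ is needed to absorb it.
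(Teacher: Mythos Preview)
Your argument is correct and follows the same parameterization as the paper: set $r'=n$ so that $(p-1)\varepsilon=2/(n+1)$ and $rs'=n(n+1)/(n-1)$, then compare with the definition of $L_\rho$. The only difference is in how you dispose of the correction factor $J_n^{-2/(n(n+1))}=\|\rho^{-1}\|_{L^n(\mm)}^{-2/(n+1)}$: the paper observes that $\|\rho^{-1}\|_{L^n(\mm)}\to\|\rho^{-1}\|_{L^\infty(\mm)}$ and splits into the cases $\|\rho^{-1}\|_{L^\infty(\mm)}\in(0,\infty)$ and $\|\rho^{-1}\|_{L^\infty(\mm)}=\infty$, whereas you avoid the case distinction via a Jensen lower bound $J_n^{1/n}\geq c>0$. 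Both routes are equally short; yours is perhaps slightly cleaner in that it does not require separating cases. One small imprecision: from your upper bound $J_n^{-2/(n(n+1))}\leq (c/2)^{-2/(n+1)}\to 1$ you only obtain $\limsup_k\leq (2^p/(p-1)^p)L_\rho$ along the subsequence, not the limit you claim---but this is of course all that is needed for \eqref{limit}.
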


\begin{proof}
Setting $\varepsilon=2/[(n+1)(p-1)]$ gives $n=r'$ and $rs'=n(n+1)/(n-1)$. Hence the left hand side of \eqref{limit} is bounded from above by
\begin{align*}
\liminf_{n \to \infty} \frac{2^p}{(p-1)^p(n+1)^p} \left( \int_{X} \rho^{n}\right)^{\frac{1}{n}}\left( \int_{X} \rho^{-n}\right)^{\frac{n-1}{n(n+1)}}.
\end{align*}
Since $\mm(X)<\infty$ and $\rho^{-1}\in L^{1/(p-1)}(\mm)$, it follows that $\|\rho^{-1}\|_{L^n(\mm)}\to \| \rho^{-1}\|_{L^{\infty}(\mm)}$ as $n\to \infty$. Hence if $0<\| \rho^{-1}\|_{L^{\infty}(\mm)}<\infty$ then
\begin{align*}
&\liminf_{n \to \infty} \frac{2^p}{(p-1)^p(n+1)^p} \|\rho\|_{L^{n}(\mm)}\|\rho^{-1}\|_{L^{n}(\mm)}^{\frac{n-1}{n+1}}\\
&\qquad = \liminf_{n \to \infty} \frac{2^p}{(p-1)^p(n+1)^p} \|\rho\|_{L^{n}(\mm)}\|\rho^{-1}\|_{L^{n}(\mm)}\\
&\qquad =\frac{2^p}{(p-1)^p}L_{\rho}
\end{align*}
and the thesis follows. If $\|\rho^{-1}\|_{L^{\infty}(\mm)}=\infty$ then $\|\rho^{-1}\|_{L^{n}(\mm)}>1$ for sufficiently large $n$ and we can use the trivial inequality
\[\|\rho^{-1}\|_{L^{n}(\mm)}^{\frac{n-1}{n+1}}< \|\rho^{-1}\|_{L^{n}(\mm)}\]
for every $n$ to obtain the same bound. The case $\|\rho^{-1}\|_{L^{\infty}(\mm)}=0$ is impossible since $\rho \in L^{1}(\mm)$.
\end{proof}

Using \eqref{limitfin}, \eqref{Fat}, \eqref{limit} and the fact that $p/p'=p-1$ we get
\[ \|u\|_{\rho}^{p}\leq CL_{\rho}^{1/p}\|u\|_{\rho}^{p}\]
for some structural constant $C$. Since $u\neq 0$, we obtain
\begin{equation}\label{ineqcontra}
1\leq CL_{\rho}.
\end{equation}
As in \cite{Zhi}, the strategy is now to use the fact that $C$ is independent of $\rho$ to derive a contradiction. 
Following the notation of Theorem \ref{mainthm} we write
\[L_{\eta}=\liminf_{n\to\infty} \frac{1}{n^p} \|\eta \|_{L^{n}(\mm)}\|\eta^{-1}\|_{L^{n}(\mm)}\]
for any Borel $\eta\colon X \to [0,\infty]$. 


\begin{prop}\label{newweight}
For all $\delta>0$ there exists a weight $\widetilde{\rho}\colon X \to [0,\infty)$ satisfying the hypotheses of Theorem~\ref{mainthm} 
such that $L_{\widetilde{\rho}}\leq\delta$, $W^{1,p}_{\rho}=W^{1,p}_{\widetilde{\rho}}$ and $H^{1,p}_{\rho}=H^{1,p}_{\widetilde{\rho}}$. 
\end{prop}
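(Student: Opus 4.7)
\textbf{Proof plan for Proposition \ref{newweight}.}

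Given $\delta>0$, the plan is to construct $\widetilde\rho$ as a controlled perturbation of $\rho$ that (a) is comparable to $\rho$ within a bounded ratio, so that the two weighted Sobolev spaces coincide with equivalent norms, and (b) has arbitrarily small $L_{\widetilde\rho}$, following the change-of-weight technique of Zhikov and Surnachev. The natural first attempt is to write $\widetilde\rho(x) := \rho(x)\,\alpha(x)$ with $\alpha\colon X\to [c^{-1},c]$ measurable, where $c\geq 1$ is a parameter to be chosen depending on $\delta$. Comparability $c^{-1}\rho\leq\widetilde\rho\leq c\rho$ immediately yields $\|\cdot\|_\rho$-equivalence with $\|\cdot\|_{\widetilde\rho}$, hence $W^{1,p}_\rho=W^{1,p}_{\widetilde\rho}$ as sets; since Lipschitz functions lie in both, the closures $H^{1,p}_\rho$ and $H^{1,p}_{\widetilde\rho}$ also coincide, settling the space-identification clauses.

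The next step is to choose $\alpha$ to shrink the product $\|\widetilde\rho\|_n\,\|\widetilde\rho^{-1}\|_n$. Heuristically, one wants $\alpha$ close to $c^{-1}$ on sets where $\rho$ is extremely large (to damp the high moments $\|\rho\|_n$) and close to $c$ on sets where $\rho$ is very small (to damp $\|\rho^{-1}\|_n$). Since by hypothesis $\mathfrak m(X)<\infty$ and $\rho,\rho^{-1}\in L^N(\mathfrak m)$ for some $N$, the dyadic level sets $A_k:=\{2^k\leq\rho<2^{k+1}\}$ have measures $\mathfrak m(A_k)$ with strong decay at $|k|\to\infty$ controlled by the integrability of $\rho$ and $\rho^{-1}$. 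The concrete choice of $\alpha$ (piecewise constant on the $A_k$, tuned via a parameter $T=T(\delta)$ that depends on the truncation level) allows one to rewrite the moments $\int\widetilde\rho^n\,\ud\mathfrak m$ and $\int\widetilde\rho^{-n}\,\ud\mathfrak m$ as weighted sums over level sets in which the contributions at large $|k|$ are squeezed by $\alpha$.

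Once the construction is in place, the verification of $L_{\widetilde\rho}\leq\delta$ amounts to plugging the chosen $\alpha$ into the lim inf defining $L_{\widetilde\rho}$, using H\"older-type inequalities to split the integrals along the level sets, and choosing the parameter $T$ large enough as a function of $\delta$ so as to force the product of the two norms, divided by $n^p$, below $\delta$ along a subsequence of $n$. The finiteness assumption $L_\rho<\infty$ is used here to ensure the starting estimates are finite before truncation; after the multiplicative damping by $\alpha$, the gain comes from the asymmetric reweighting between the $\rho$ and $\rho^{-1}$ moments.

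The main obstacle is reconciling (a) and (b). With $\alpha$ taking values in a fixed compact interval $[c^{-1},c]$, a naive comparison yields only $L_{\widetilde\rho}\geq c^{-2}L_\rho$, which prevents $L_{\widetilde\rho}$ from being made arbitrarily small. The subtlety in the Zhikov--Surnachev trick is that the reduction of $L_{\widetilde\rho}$ is not forced by making $c$ large but rather by exploiting cancellations between the extreme tails of $\rho$ and $\rho^{-1}$, coupled with an optimization over $n\to\infty$ in the lim inf. I would carry out this optimization by carefully tracking, for each candidate $\alpha$, how the truncation parameter $T$ enters the quantities $(\int\widetilde\rho^n)^{1/n}$ and $(\int\widetilde\rho^{-n})^{1/n}$ at scales dictated by the $L_\rho$ hypothesis; this is the technical heart of the argument and closely mirrors the Euclidean construction of \cite{Zhi,Surn}, now phrased purely in terms of integrals with respect to $\mathfrak m$ so that no differential structure on $X$ is invoked.
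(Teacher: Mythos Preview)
Your core idea---multiplying $\rho$ by a bounded measurable factor $\alpha$ that is small where $\rho$ is large and large where $\rho$ is small---is exactly the paper's approach, so the space-identification part of your argument is fine. However, the paper's construction is far simpler than your dyadic level-set scheme: after a case split on whether $\rho$ and $\rho^{-1}$ are essentially bounded, in the main case (both unbounded) one simply sets $\widetilde\rho=t\rho$ on $\{\rho>1\}$ and $\widetilde\rho=\rho/t$ on $\{\rho\leq 1\}$ for a small parameter $t\in(0,1)$; that is, $\alpha$ takes only the two values $t$ and $1/t$. No dyadic decomposition and no auxiliary truncation parameter $T$ are needed.

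Your paragraph on the ``main obstacle'' is confused, and this is the one place where the proposal has a genuine gap. The naive lower bound $L_{\widetilde\rho}\geq c^{-2}L_\rho$ is no obstacle at all: since $c$ is chosen depending on $\delta$, taking $c$ large drives this lower bound to zero. The real task is to prove a matching \emph{upper} bound $L_{\widetilde\rho}\lesssim c^{-2}L_\rho$, and---contrary to what you assert---the reduction \emph{is} forced precisely by making $c=1/t$ large. The key computation is elementary: from $\int_X\widetilde\rho^{\,n}\ud\mm\leq t^{-n}\mm(X)+t^{n}\int_X\rho^{n}\ud\mm$ and the unboundedness of $\rho$ (which gives $\lambda^{-n}\int_X\rho^{n}\ud\mm\to\infty$ for every $\lambda>1$, applied with $\lambda=t^{-2}$) one gets $\|\widetilde\rho\|_{L^n(\mm)}\leq 2^{1/n}t\,\|\rho\|_{L^n(\mm)}$ for all large $n$; the analogous bound for $\widetilde\rho^{-1}$ uses unboundedness of $\rho^{-1}$. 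Multiplying yields $L_{\widetilde\rho}\leq 2t^{2}L_\rho\leq\delta$ for $t$ small enough. When exactly one of $\rho,\rho^{-1}$ is bounded a slightly different two-region cut works, and when both are bounded $L_\rho=0$ already. There are no ``cancellations between extreme tails'' or optimizations over $n$ beyond this; you should replace that heuristic with the direct estimate above.
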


\begin{proof}
Throughout this proof we simply write bounded or unbounded instead of essentially bounded (i.e. in $L^{\infty}(\mm)$) or essentially unbounded (not in $L^{\infty}(\mm)$). We start by observing that if both $\rho$ and $\rho^{-1}$ are bounded then $L_{\rho}=0$, so there is nothing to prove.

If $\rho$ and $\rho^{-1}$ are both unbounded then for every $t>0$ we define
\begin{equation}\label{modweight}
\rho_t(x):=\begin{cases}
\displaystyle{\frac{1}{t}\rho(x)} & \mbox{if}\ \rho(x)\leq 1; \\\\
t\rho(x) & \mbox{if}\ \rho(x) > 1 .
\end{cases}
\end{equation}
Clearly $\rho_t$ satisfies the assumptions of Theorem~\ref{mainthm} since it is bounded by constant multiples of $\rho$.
For the same reason, the $W$ and $H$ weighted Sobolev spaces induced by $\rho$ and $\rho_t$ are the same.
Observe that
\begin{equation}\label{eq:may16}
\int_{X}\rho_t^n \leq t^{-n}\mm(X)+t^n\int_{X}\rho^n,\qquad
\int_{X}\rho_t^{-n} \leq t^{-n}\mm(X)+t^n\int_{X}\rho^{-n}.
\end{equation}
Since $\rho$ is unbounded, we get
\begin{align}\label{keyest}
\lim_{n\to \infty}\frac{1}{\mm(X)}\lambda^{-n}\int_X \rho^n =\infty\qquad\forall\lambda>1.
\end{align}
Using \eqref{keyest} with $\lambda=t^{-2}$ we get
\[
t^{-n}\mm(X)\leq t^n\int_X \rho^n\qquad\text{for $n$ sufficiently large.}
\]
Therefore the first inequality in \eqref{eq:may16} gives
\begin{align}\label{estrho}
\| \rho_t\|_{L^n(\mm)}\leq 2^{\frac{1}{n}}t\| \rho\|_{L^n(\mm)}\qquad\text{for $n$ sufficiently large.}
\end{align}
Arguing in the same way using the unboundedness of $\rho^{-1}$ and the second inequality in
\eqref{eq:may16} we obtain 
\begin{align}\label{estrhomin}
\| \rho_t^{-1}\|_{L^n(\mm)}\leq 2^{\frac{1}{n}}t\| \rho^{-1}\|_{L^n(\mm)}\qquad\text{for $n$ sufficiently large.}
\end{align} 
Hence, by \eqref{estrho} and \eqref{estrhomin} we obtain the thesis choosing $t>0$ with $2t^2<C^{-1}$.

Let us now assume that $\rho$ is bounded but $\rho^{-1}$ is unbounded. For every $t>0$ we define
\begin{align}\label{modweight2}
\rho_t(x):=\bigg \{
\begin{array}{rl}
\frac{1}{t}\rho(x) & \mbox{if}\ 0\leq \rho(x) \leq t \\
 \rho(x) & \mbox{if}\ \rho(x)>t. \\
\end{array}
\end{align}
As before, $\rho_{t}$ satisfies the hypotheses of Theorem \ref{mainthm}. We observe,
\begin{align*}
\int_{X}\rho_t^{-n}&=t^n\int_{\{\rho\leq t\}}\rho^{-n}+\int_{\{\rho>t\}}\rho^{-n}\\
\nonumber
&\leq t^n\int_X \rho^{-n}+ t^{-n}\mm(X).
\end{align*}
Since $\rho^{-1}$ is unbounded proceeding as in \eqref{estrho} we obtain
\begin{align}\label{wr}
\| \rho_{t}^{-1}\|_{L^n(\mm)}\leq 2^{\frac{1}{n}}t \| \rho^{-1}\|_{L^n(\mm)},
\end{align}
while
\begin{align}\label{limsup}
\|\rho_t\|_{L^n(\mm)}\leq \|\rho\|_{L^{\infty}(\mm)}+1
\end{align}
for every $n$. Because $\rho$ is bounded we have
\begin{align}\label{limite}
L_{\rho}=\liminf_{n\to \infty}\frac{\| \rho\|_{L^{\infty}(\mm)}\| \rho^{-1}\|_{L^n(\mm)}}{n^p}.
\end{align} 
Putting together \eqref{wr}, \eqref{limsup} and \eqref{limite} we get
\[
\liminf_{n\to\infty} \frac{1}{n^p} \|\rho_{t} \|_{L^{n}(\mm)}\|\rho_{t}^{-1}\|_{L^{n}(\mm)} \leq 2t L_{\rho}(1+\|\rho\|_{L^{\infty}(\mm)}^{-1})
\]
and we conclude, again, choosing $t>0$ sufficiently small.
The case where $\rho^{-1}$ is bounded and $\rho$ is unbounded is analogous.
\end{proof}

We can now conclude the proof of Theorem \ref{mainthm}. Choose $\delta\in (0,C^{-1})$, where $C>0$ is the constant in \eqref{ineqcontra} and apply 
Proposition~\ref{newweight} to find a weight $\widetilde{\rho}$ satisfying the hypotheses of Theorem~\ref{mainthm} such that $L_{\widetilde{\rho}}\leq\delta$, $W^{1,p}_{\rho}=W^{1,p}_{\widetilde{\rho}}$ and $H^{1,p}_{\rho}=H^{1,p}_{\widetilde{\rho}}$. Then the assumption that $W^{1,p}_{\rho}\setminus H^{1,p}_{\rho} \neq \varnothing$ implies $W^{1,p}_{\widetilde{\rho}} \setminus H^{1,p}_{\widetilde{\rho}}\neq \varnothing$ and we may repeat all of our arguments to obtain an analogue of \eqref{ineqcontra} with $\widetilde{\rho}$ in place of $\rho$; hence,
\[1\leq CL_{\widetilde{\rho}}\leq C\delta<1\]
which gives a contradiction.

\section{Examples and Extensions}\label{sec:5}

In this section we discuss some examples and generalize our results by considering Muckenhoupt weights or by requiring a weaker Poincar\'{e} inequality.

\subsection{An example where $W^{1,p}_\rho\supsetneq H^{1,p}_\rho$}

Let us consider the standard Euclidean structure $X=$ closed unit ball of $\R^2$, $\ud=$ Euclidean distance, $\mm=\Leb{2}$. In \cite{ChiSC} examples
of weights $\rho\in L^1(\mm)$ with $\rho^{-1}\in L^{1/(p-1)}(\mm)$ and $H^{1,p}_\rho\subsetneq W^{1,p}_\rho$ are given for any
$p>1$. Here we report only the example with $p=2$, with a weight $\rho$ in all $L^q$ spaces having also the inverse in all
$L^q$ spaces, $1\leq q<\infty$.

Let $\Omega=B(0,1)\subset\bbR^2$ and $\epsilon\in (0,\pi/2)$. Set \[S_{\epsilon}:=\{(x_1,x_2)\in\Omega\ |\ \tan(\epsilon)< \frac{x_2}{x_1}<\tan(\frac{\pi}{2}-\epsilon)\},\]
$S_{\epsilon}^+:=S_{\epsilon}\cap\{x_2>0\}$ and $S_{\epsilon}^{-}:=S_{\epsilon}\cap \{x_2<0\}$. Let us consider $\rho:\bbR^2\setminus \{0\}\to [0,\infty)$ defined by 
\[
\rho(x):=
\begin{cases}
\left(\ln^{-2}\left(\frac{e}{|x|}\right)\right)^{k\arccos\left(\frac{x_1}{|x_1|}\right)}& \mbox{if}\  0<|x|\leq 1\\
1& \mbox{if}\  |x|>1,
\end{cases}
\]
where $k:\bbR\to [-1,0]$ is a $\pi-$periodic smooth function such that $k'(0)=0$ and 
\[
k\equiv -1\quad \mbox{in}\ \left(\epsilon, \frac{\pi}{2}-\epsilon\right), \qquad k\equiv 0\quad  \mbox{in}\ \left(\frac{\pi}{2},\pi\right).
\]
It follows that $\lambda\in C^0(\bbR^2\setminus \{0\})$ and that
\[
\rho,\rho^{-1}\in \bigcap_{q\in [1,\infty)} L^{q}_{\mathrm{loc}}(\bbR^2).
\]
It is proved by a direct calculation in \cite{ChiSC} that the function
$$
u(x):=
\begin{cases} 1 &\text{if $x_1>0$, $x_2>0$}\cr
 0 &\text{if $x_1<0$, $x_2<0$}\cr
  x_2/|x| &\text{if $x_1<0$, $x_2>0$}\cr
  x_1/|x| &\text{if $x_1>0$, $x_2<0$}
\end{cases}
$$
belongs to $W^{1,2}_{\rho}\setminus H^{1,2}_{\rho}$.

\subsection{Muckenhoupt weights and the $\PI_p$ condition}

An important class of weights is the one introduced by Muckenhoupt \cite{Muk} to study the boundedness of the maximal operator in $L^p$ spaces.
In Euclidean spaces, for this $p$-dependent class of weights $\rho$ it is known that $W^{1,p}_{\rho}=H^{1,p}_{\rho}$,
 see for instance \cite{ChSc1,HKM93} and Theorem~\ref{thm:main3} below. Let us recall the definition of Muckenhoupt weight in the context of metric measure spaces.
 
\begin{defi}[Muckenhoupt weight]
Let $(X,\ud,\mm)$ be a metric measure space and let $\rho:X\to [0,\infty]$ be locally integrable. 
For $p>1$, we say that $\rho$ is an $A_p$-weight if 
\[
\sup_B\left( \ave_{B} \rho \ud\mm \right) \left( \ave_{B} \rho^{-1/(p-1)}\ud\mm\right)^{p-1} <\infty
\]
where the supremum runs among all balls $B$. We say that $\rho$ is an $A_{1}$-weight if there exists a constant $C$ such that
\[
\ave_B \rho\ud\mm\leq C\,{\rm ess}\inf_B u
\]
for all balls $B\subset X$. We denote the class of $A_{p}$ weights on $(X,\ud,\mm)$ by $A_{p}(\mm)$.
\end{defi} 

It is immediate to see, using the H\"older inequality, that $\bigl(\ave_{B} \rho \ud\mm\bigr)\bigl(\ave_{B} \rho^{-1/(p-1)}\ud\mm\bigr)^{p-1}$
is always larger than $1$.
This easily yields that $(X,\ud,\rho\mm)$ is doubling whenever $(X,\ud,\mm)$ is doubling; indeed, for $p>1$,
$$
\ave_{2B}\rho\ud\mm\leq C\biggl(\ave_{2B}\rho^{-1/(p-1)}\ud\mm\biggr)^{1-p}\leq
C\biggl(\ave_{B}\rho^{-1/(p-1)}\ud\mm\biggr)^{1-p}\leq C\ave_{B} \rho\ud\mm
$$ 
and a similar argument works for $p=1$. It follows that the maximal operator with respect to $\rho\mm$ is bounded in $L^p(\rho\mm)$ for all $p>1$. A remarkable fact, proved in the Euclidean case
by Muckenhoupt \cite{Muk}, with a proof that extends readily also to doubling metric measure spaces $(X,\ud,\mm)$ (see \cite[Theorem 9]{ST}), is the fact that even the
maximal operator $M$ in \eqref{eq:def_maximal}, namely the maximal operator respect to $\mm$, is bounded in $L^p(\rho\mm)$ for all $p>1$, and weakly bounded if $p=1$. 



For $p>1$ it is well known that an $A_{p}$ weight $\rho$ on a Euclidean space is $p$-admissible \cite{HKM93}; this means that the weighted space $(\mathbb{R}^{n},|\cdot |,\rho\mathcal{L}^{n})$ satisfies $\PI_{p}$. A generalization in the metric setting is proved in \cite{Bjorn}. A converse holds in dimension one but it is an open problem for higher dimensions \cite{BBK06}. 


Let us now compare the Muckenhoupt condition with the Zhikov one, introduced in \cite{Zhi} and used also in the present paper.
\begin{defi}
Let $\rho\colon X \to [0,\infty]$ be Borel and let $p> 1$. We say that $\rho$ belongs to the class $Z_{p}(\mm)$ if
\[\liminf_{n \to \infty} \frac{1}{n^{p}}\|\rho\|_{L^{n}(\mm)}\|\rho^{-1}\|_{L^{n}(\mm)}<\infty.\]
\end{defi}
Even though both the Muckenhoupt and Zhikov conditions lead to the identification of the weighted Sobolev spaces, the following simple examples show that
they are not comparable, even in the Euclidean case. One of the reasons is that the class $A_p$ involves a more local condition; for instance there is no
reason for $(X,\ud,\rho\mm)$ to be doubling when $(X,\ud,\mm)$ is doubling and $\rho\in Z_p(\mm)$. As a matter of fact, 
the Zhikov condition is easier to check. For instance, if both $\exp(t\rho)$ and  $\exp(t \rho^{-1})$ belong
to $L^{1}(\mm)$ for some $t>0$, then $\rho \in Z_{2}(\mm)$ (see the simple proof in \cite{Zhi}, still valid in the metric measure
setting).

\begin{es}{\rm 
Let $X$ be the unit ball of $\R^2$ and let $\rho\colon X \to \mathbb{R}$ be given by
\begin{align*}
\rho(x):=\bigg \{
\begin{array}{rl}
\log(1/|x|) & \mbox{if}\ x_{1}x_{2}>0,\\
{\displaystyle{\frac{1}{\log(1/|x|)}}} & \mbox{if}\ x_{1}x_{2}<0.\\
\end{array}
\end{align*}
It is easy to check that $\exp(\rho),\ \exp(\rho^{-1}) \in L^1(\Leb{2})$ (see also \cite{Zhi}), which implies $\rho \in Z_{2}(\Leb{2})$. 
Further, one can easily prove that $\rho\notin A_{p}(\Leb{2})$ for any $p\geq 1$; indeed, the average of both $\rho$ and $\rho^{-1}$ 
on balls centred at the origin tends to infinity as the radius of the ball tends to zero.
}\end{es}

\begin{es} {\rm Let $X=(0,1)$. Then, by a direct computation, $|x|^{\alpha} \in A_{p}(\Leb{1})$ provided $-1<\alpha<p-1$. 
Hence $1/\sqrt{x} \in A_{p}(\Leb{1})$  for any $p\geq 1$; however, $1/\sqrt{x} \notin Z_{p}(\Leb{1})$ for any $p\geq 1$ since $1/\sqrt{x}\notin L^{n}(\Leb{1})$ for any $n\geq 2$.}
\end{es}
 
 \subsection{Relaxation of the $\PI_1$ assumption to $\PI_p$.} Suppose $\rho^{-1}\in L^\alpha(\mm)$ for some exponent $\alpha\in (1/(p-1),\infty]$ and set $q=p\alpha/(\alpha+1)$ ($q=p$ if $\alpha=\infty$). Then the definition
\begin{equation}\label{eq:Alica2}
W^{1,p}_{\rho,q}:=\left\{f\in W^{1,q}(X,\ud,\mm):\ \int_X |f|^p\rho\ud\mm+\int_X|\nabla f|_{q,\mm}^p\rho\ud\mm<\infty\right\},
\end{equation}
with the corresponding norm, and the corresponding definition of $H^{1,p}_{\rho,p}$ (namely the closure
of $\mathrm{Lip}(X)\cap W^{1,p}_{\rho,q}$ inside $W^{1,p}_{\rho,q}$) are much more natural.
Indeed, there is already a natural embedding in $W^{1,q}(X,\ud,\mm)$ that is missing in the general case, so there is no necessity to invoke the space
$W^{1,1}(X,\ud,\mm)$ and the $\PI_1$ structure (notice that $q=1$ corresponds precisely to $\alpha=1/(p-1)$). 
The embedding provides completeness of $W^{1,p}_{\rho,q}$, via the completeness of $W^{1,q}(X,\ud,\mm)$, 
and also the proof of reflexivity can be immediately adapted to the space $W^{1,p}_{\rho,q}$.

Assume now that $\rho^{-1}\in L^\alpha(\mm)$ for any $\alpha\in (1/(p-1),\infty)$, as it happens when $L_\rho<\infty$; in this case we can
choose the power $q$ in \eqref{eq:Alica2} as close to $p$ as we wish, and use the fact that $\PI_p$ is an open ended condition
to choose $q$ in such a way that $\PI_q$ still holds. This leads to the following result 
(which also shows that the space $W^{1,p}_{\rho,q}$ is essentially independent of the exponent $q$).

\begin{teo} \label{thm:main4} Let $(X,\ud,\mm)$ be a $\PI_p$ metric measure space. Let $\rho\in L^1_{\mathrm{loc}}(\mm)$ be a nonnegative Borel function satisfying $\rho^{-1}\in L^\alpha(\mm)$ for all $\alpha\in (1/(p-1),\infty)$. Then the space $W^{1,p}_{\rho,q}$ in \eqref{eq:Alica2}
and its norm do not depend on the choice of $q\in [1,p)$, as soon as $\PI_q$ holds, and
\begin{equation}\label{eq:Alica5}
H^{1,p}_{\rho,q}=W^{1,p}(X,\ud,\rho\mm)\qquad\text{whenever $\PI_q$ holds.}
\end{equation}
If, in addition, $L_\rho<\infty$, then $H^{1,p}_{\rho,q}=W^{1,p}_{\rho,q}$. 
\end{teo}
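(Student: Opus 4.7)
The plan is to verify the three assertions in order: (a) $W^{1,p}_{\rho,q}$ and its norm are independent of $q\in[1,p)$ subject to $\PI_q$; (b) $H^{1,p}_{\rho,q}=W^{1,p}(X,\ud,\rho\mm)$; (c) $H^{1,p}_{\rho,q}=W^{1,p}_{\rho,q}$ under $L_\rho<\infty$. The common mechanism is the integrability $\rho^{-1}\in L^\alpha(\mm)$ for every finite $\alpha>1/(p-1)$, which permits passage between weighted $L^p(\rho\mm)$ and unweighted $L^{q'}(\mm)$ estimates via H\"older's inequality, together with the fact that the proofs of Theorems~\ref{reflexivity}, \ref{thm:main1}, and \ref{mainthm} use $\PI_1$ only through (i) the existence of a Cheeger differentiable structure and (ii) the Lusin-type Lipschitz approximation of Proposition~\ref{prop:propW11}, both of which remain available under $\PI_q$.

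For (a) I fix $1\leq q<q'<p$ with both $\PI_q$ and $\PI_{q'}$ valid and take $f\in W^{1,p}_{\rho,q}$. H\"older's inequality with exponents $p/q'$ and $p/(p-q')$ yields
\[
\int_X |f|^{q'}\ud\mm\leq \|f\|_{L^p(\rho\mm)}^{q'}\biggl(\int_X \rho^{-q'/(p-q')}\ud\mm\biggr)^{(p-q')/p},
\]
and analogously for $|\nabla f|_{q,\mm}$; since $q'/(p-q')>1/(p-1)$ for $q'>1$ (with the boundary case $q'=1$ requiring $\rho^{-1}\in L^{1/(p-1)}$, subsumed in the standing assumption under which $W^{1,p}_{\rho,1}=W^{1,p}_\rho$ is defined), both integrands lie in $L^{q'}(\mm)$ and Proposition~\ref{prop:propW11bis} lifts $f$ to $W^{1,q'}(X,\ud,\mm)$. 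Applying the Lusin approximation of Proposition~\ref{prop:propW11} at exponent $q'$ and invoking \eqref{eq:may242} at both exponents on the Lipschitz approximants identifies $|\nabla f|_{q',\mm}=|\nabla f|_{q,\mm}$ $\mm$-a.e., so $\|f\|_{\rho,q}=\|f\|_{\rho,q'}$. The reverse inclusion follows by exchanging the roles of $q$ and $q'$, using that a $q'$-weak upper gradient is automatically a $q$-weak upper gradient on bounded sets, so minimality yields $|\nabla f|_{q,\mm}\leq|\nabla f|_{q',\mm}$, with equality by the previous step.

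For (b) I would repeat the proof of Theorem~\ref{thm:main1} with $|\nabla\cdot|_w$ replaced by $|\nabla\cdot|_{q,\mm}$; reflexivity of $W^{1,p}_{\rho,q}$ is the adaptation of Theorem~\ref{reflexivity} flagged after \eqref{eq:Alica2}. Given $f\in W^{1,p}(X,\ud,\rho\mm)$, Theorem~\ref{thH=W} supplies Lipschitz $f_n\to f$ in $L^p(\rho\mm)$ with $|\nabla f_n|\to|\nabla f|_{p,\rho\mm}$ in $L^p(\rho\mm)$; the pointwise bound $|\nabla f_n|_{q,\mm}\leq|\nabla f_n|$ for Lipschitz $f_n$ keeps $\|f_n\|_{\rho,q}$ uniformly bounded, so reflexivity places a weak subsequential limit inside the closed convex subspace $H^{1,p}_{\rho,q}$, identifying it with $f$. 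Conversely, for $f\in H^{1,p}_{\rho,q}$ approximated by Lipschitz $f_n$ in $\|\cdot\|_{\rho,q}$, the $\PI_q$-version of \eqref{eq:may242} converts bounds on $|\nabla f_n|_{q,\mm}$ in $L^p(\rho\mm)$ into bounds on the slopes $|\nabla f_n|$, placing $f$ in $H^{1,p}(X,\ud,\rho\mm)=W^{1,p}(X,\ud,\rho\mm)$.

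For (c) I would transcribe the proof of Theorem~\ref{mainthm} with $|\nabla\cdot|_w$ replaced throughout by $|\nabla\cdot|_{q,\mm}$, selecting some $q\in(1,p)$ for which $\PI_q$ holds via the open-endedness result \cite{KZ}. Under $L_\rho<\infty$ the space still satisfies $\mm(X)<\infty$ and $\rho^{-1}\in L^\alpha(\mm)$ for every finite $\alpha$, so the truncation $g=\max\{|u|,M^{1/q}(|\nabla u|_{q,\mm}^q)\}$, the localized Lipschitz estimate \eqref{eq:localiplip}, the orthogonality inequality \eqref{orthogonality}, Proposition~\ref{distest}, and the weight-modification trick of Proposition~\ref{newweight} all carry over. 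The H\"older--maximal chain \eqref{stringA}--\eqref{limitfin} now invokes boundedness of $M$ on $L^{p_\varepsilon r/q}(\mm)$, which is available and uniform for small $\varepsilon$ because $p_\varepsilon r/q\to p/q>1$. I expect the main obstacle to be bookkeeping: checking that every structural constant in this chain is independent of $\varepsilon$ as $\varepsilon\downarrow 0$, and verifying that the identification of minimal gradients from part (a) makes the final contradiction $1\leq CL_{\widetilde\rho}$ a genuine statement about the norm on $W^{1,p}_{\rho,q}$.
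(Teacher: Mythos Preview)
Your overall strategy matches the paper's: identify the minimal $q$- and $q'$-gradients via Lipschitz approximation and locality, invoke Proposition~\ref{prop:propW11bis} for set equality, then rerun the proofs of Theorems~\ref{thm:main1} and~\ref{mainthm} with $|\nabla\cdot|_{q,\mm}$ in place of $|\nabla\cdot|_w$. Parts (b) and (c) are fine as written.

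The gap is in part (a), and it is precisely the citation of \eqref{eq:may242}. That inequality gives only the two-sided comparability $|\nabla f_n|_{q,\mm}\leq |\nabla f_n|\leq c\,|\nabla f_n|_{q,\mm}$ for Lipschitz $f_n$, with a structural constant $c>1$. Combined with locality on the sets $\{f=f_n\}$ this yields $|\nabla f|_{q,\mm}\leq |\nabla f|_{q',\mm}\leq c\,|\nabla f|_{q,\mm}$, hence only \emph{equivalence} of the norms $\|\cdot\|_{\rho,q}$ and $\|\cdot\|_{\rho,q'}$. The theorem asserts that the norm itself is independent of $q$, so you need the pointwise identity $|\nabla f|_{q,\mm}=|\nabla f|_{q',\mm}$. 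The paper obtains this by invoking the sharper fact (quoted in the paper as \eqref{eq:second_piece} and, for general $q$, attributed to \cite[Theorem~12.5.1]{ShaK}) that for Lipschitz $f\in W^{1,q}(X,\ud,\mm)$ under $\PI_q$ one has the exact equality $|\nabla f|=|\nabla f|_{q,\mm}$ $\mm$-a.e. With this in hand, locality and Lusin approximation give $|\nabla f|_{q,\mm}=|\nabla f|_{q',\mm}$ $\mm$-a.e.\ for every $f\in W^{1,q}\cap W^{1,q'}$ when both $\PI_q$ and $\PI_{q'}$ hold, and then Proposition~\ref{prop:propW11bis} finishes (a). Replace your appeal to \eqref{eq:may242} by this equality and your argument goes through; as a side benefit, the somewhat delicate ``$q'$-weak upper gradient is a $q$-weak upper gradient on bounded sets'' step becomes unnecessary.
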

\begin{proof} Recall that (see Theorem~12.5.1 in \cite{ShaK}, while \cite{Che} covered only the case $q>1$) 
$$
|\nabla f|=|\nabla f|_{q,\mm}\qquad\text{$\mm$-a.e. in $X$, for all $f\in\mathrm{Lip}(X)\cap W^{1,q}(X,\ud,\mm)$}
$$
for all $q\in [1,\infty)$, under the $\PI_q$ condition. 
This, combined with the locality of weak gradients and the Lusin approximation with Lipschitz functions gives
$$
|\nabla f|_{q,\mm}=|\nabla f|_{q',\mm}\qquad\text{$\mm$-a.e. in $X$}
$$
whenever $f\in W^{1,q}(X,\ud,\mm)\cap W^{1,q'}(X,\ud,\mm)$ and both $\PI_q$ and $\PI_{q'}$ hold.
Then, the independence of $W^{1,p}_{\rho,q}$ with respect to $q$ follows by Proposition~\ref{prop:propW11bis}.
The identity \eqref{eq:Alica5} and the last statement can be obtained repeating respectively the proofs of Theorem~\ref{thm:main1} 
and Theorem~\ref{mainthm} with this new class of weighted spaces.
\end{proof}

 \subsection{Combination of Zhikov and Muckenhoupt weights}
 
Zhikov \cite{Zhi} proves identification of weighted Sobolev spaces for weights $\rho$ expressable as a product $\rho=\rho_{M}\rho_{Z}$ where $\rho_{M} \in A_{p}(\mathcal{L}^{n})$ and $\rho_{Z} \in Z_{p}(\rho_{M}\mathcal{L}^{n})$. The minor adaptations needed to include Muckenhoupt weights work also
in the metric setting.

\begin{teo}\label{ZhiMo}
Suppose $(X,\ud,\mm)$ is a $\PI_{1}$ metric measure space. Let $p>1$ and $\rho=\rho_{M}\rho_{Z}$ where $\rho_{M} \in A_{p}(\mm)$ and $\rho_{Z}\in Z_{p}(\rho_{M}\mm)$. If $\rho \in L^{1}_{\mathrm{loc}}(\mm)$ and $\rho^{-1}\in L^{1/(p-1)}(\mm)$ then $W^{1,p}_{\rho}(\mm)=H^{1,p}_{\rho}(\mm)$.
\end{teo}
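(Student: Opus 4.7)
The plan is to reduce the statement to the pure Zhikov-weight case already covered by Theorem~\ref{thm:main4}, via a change of base measure. Set $\tilde\mm:=\rho_M\mm$, so that $\rho\mm=\rho_Z\tilde\mm$ and $\tilde\mm$ is locally finite because $\rho_M\in L^1_{\mathrm{loc}}(\mm)$. Since $\rho_M\in A_p(\mm)$, the Invariance of $\PI_p$ proposition proved above gives that $(X,\ud,\tilde\mm)$ is $\PI_p$, and by the open-endedness of the Poincar\'e condition it is also $\PI_q$ for some $q\in(1,p)$. I would then verify the hypotheses of Theorem~\ref{thm:main4} applied to $(X,\ud,\tilde\mm)$ with weight $\rho_Z$: local integrability $\rho_Z\in L^1_{\mathrm{loc}}(\tilde\mm)$ is immediate from $\int_B\rho_Z\,\ud\tilde\mm=\int_B\rho\,\ud\mm<\infty$; moreover the assumption $\rho_Z\in Z_p(\tilde\mm)$, by the discussion right after the statement of Theorem~\ref{mainthm}, forces $\tilde\mm(X)<\infty$ and $\rho_Z^{-1}\in L^\alpha(\tilde\mm)$ for every finite $\alpha>1/(p-1)$.

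With these in place, Theorem~\ref{thm:main4} combined with Theorem~\ref{thm:main1} applied to the original $\PI_1$ structure yields
\[
H^{1,p}_{\rho_Z,q}(\tilde\mm)=W^{1,p}_{\rho_Z,q}(\tilde\mm)=W^{1,p}(X,\ud,\rho_Z\tilde\mm)=W^{1,p}(X,\ud,\rho\mm)=H^{1,p}_\rho(\mm).
\]
Since $H^{1,p}_\rho(\mm)\subseteq W^{1,p}_\rho(\mm)$ by definition, it remains to prove the reverse inclusion $W^{1,p}_\rho(\mm)\subseteq W^{1,p}_{\rho_Z,q}(\tilde\mm)$, understood as an inclusion of equivalence classes (legitimate because $\rho_M>0$ $\mm$-a.e.). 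The zeroth-order parts of the two norms agree automatically since $\rho\mm=\rho_Z\tilde\mm$, so what is really required is that any $f\in W^{1,p}_\rho(\mm)$ also lie in $W^{1,q}(X,\ud,\tilde\mm)$, with $|\nabla f|_{q,\tilde\mm}=|\nabla f|_{1,\mm}$ $\mm$-a.e.

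For Lipschitz $f$ the identification of the two weak gradients is immediate, since by \eqref{eq:second_piece} in the $\PI_1$ structure on $\mm$ and the analogous identity in the $\PI_q$ structure on $\tilde\mm$ (both cases of \cite[Theorem 12.5.1]{ShaK}) each of them coincides a.e. with the slope $|\nabla f|$. Locality of weak gradients, combined with the Lusin-Lipschitz approximation of Proposition~\ref{prop:propW11} on exhausting sets $E_n$, propagates this identification to every $f$ in the intersection $W^{1,1}(X,\ud,\mm)\cap W^{1,q}(X,\ud,\tilde\mm)$. The main obstacle is then to show that $f\in W^{1,p}_\rho(\mm)$ forces $f\in W^{1,q}(X,\ud,\tilde\mm)$: the natural candidate for a $q$-weak upper gradient with respect to $\tilde\mm$ is $|\nabla f|_{1,\mm}$ itself, whose $L^q(\tilde\mm)$-integrability follows from H\"older's inequality with exponents $p/q$ and $p/(p-q)$, since the resulting auxiliary integral reduces to $\int\rho_Z^{-q/(p-q)}\,\ud\tilde\mm<\infty$; the $q$-weak upper gradient property is then transferred from $\mathrm{Mod}_{1,\mm}$-a.e. curves to $\mathrm{Mod}_{q,\tilde\mm}$-a.e. curves through the same Lipschitz approximations, whose slopes are valid upper gradients with respect to both base measures.
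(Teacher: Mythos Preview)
Your route is genuinely different from the paper's. The paper does not change base measure: it simply reruns the contradiction argument of Theorem~\ref{mainthm} for the weight $\rho=\rho_M\rho_Z$, and at the single step \eqref{maxest} it applies H\"older's inequality and boundedness of the $\mm$-maximal operator \emph{in $L^{p_\varepsilon r}(\rho_M\mm)$} (this is exactly where $\rho_M\in A_p(\mm)$ enters). Everything else---the orthogonality relation \eqref{orthogonality}, the Lipschitz truncations, Proposition~\ref{distest}, and Proposition~\ref{newweight}---stays verbatim, with $\rho_Z$ playing the role that $\rho$ played before and $\rho_M\mm$ absorbing the reference measure in the final $Z_p$-type limit. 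This is a two-line modification of an already-proved theorem.

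Your reduction is conceptually attractive but has a real gap at the last step, the inclusion $W^{1,p}_\rho(\mm)\subseteq W^{1,p}_{\rho_Z,q}(\tilde\mm)$. Concretely, you must show that $f\in W^{1,1}(X,\ud,\mm)$ with $|\nabla f|_{1,\mm}\in L^q(\tilde\mm)$ lies in $W^{1,q}(X,\ud,\tilde\mm)$. Your proposed transfer ``through the same Lipschitz approximations'' needs either $f_n\to f$ in $L^q(\tilde\mm)$ together with $\sup_n\||\nabla f_n|\|_{L^q(\tilde\mm)}<\infty$, or a direct comparison of $\mathrm{Mod}_{1,\mm}$- and $\mathrm{Mod}_{q,\tilde\mm}$-null families of curves. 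For the first option, Proposition~\ref{prop:propW11} applied in $W^{1,1}(\mm)$ only gives $\sup_n n\,\mm(X\setminus E_n)<\infty$, whereas the $L^q(\tilde\mm)$ bounds you need require control of $n^q\,\tilde\mm(X\setminus E_n)$; passing from $\mm$ to $\tilde\mm=\rho_M\mm$ here demands quantitative $A_\infty$-type information on $\rho_M$ (e.g.\ reverse H\"older, or open-endedness of $A_p$ so that $M_\mm$ is bounded on $L^q(\tilde\mm)$ for your chosen $q<p$), none of which you invoke. The second option is precisely the kind of modulus comparison the introduction flags as ``very difficult'' even in simpler settings. So as written the argument does not close; it can likely be repaired by also using the self-improvement $\rho_M\in A_{p-\epsilon}(\mm)$ and then choosing $q>p-\epsilon$, but that is an extra nontrivial ingredient and should be made explicit.
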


\begin{proof}

As remarked in the discussion before Theorem \ref{thm:main3} we know that, the maximal operator with respect to $\mm$ is bounded in $L^{p}(\rho_{M}\mm)$ if $p>1$. To obtain the identification $W^{1,p}_{\rho}(\mm)=H^{1,p}_{\rho}(\mm)$ we apply exactly the same argument as in the proof of Theorem~\ref{mainthm} apart from the fact that in \eqref{maxest} we apply H\"older's inequality and boundedness of the maximal operator with respect to the measure $\rho_{M}\mm$; hence \eqref{maxest} changes to
\begin{align*}
\int_X M(|\nabla u|_{w})^{p_\varepsilon} \rho_Z \rho_M &\leq \left( \int_X M(|\nabla u|_{w})^{p_\varepsilon r} \rho_M\right)^{1/r} \left( \int_X \rho_Z^{r'}\rho_M \right)^{1/r'}\\
\nonumber
&\leq C \left( \int_X |\nabla u|_{w}^{p_\varepsilon r}\rho_Z^{1/s}\rho_{Z}^{-1/s}\rho_M\right)^{1/r}\left( \int_X \rho_Z^{r'}\rho_M \right)^{1/r'}\\
\nonumber
&\leq C \left( \int_X |\nabla u|_{w}^{p_\varepsilon rs}\rho_Z \rho_M\right)^{1/(rs)} \left( \int_X \rho_Z^{r'}\rho_M\right)^{1/r'} \left( \int_X \rho_Z^{-s'/s} \rho_M\right)^{1/(rs')}\\
\nonumber
&\leq C\|u\|_{\rho}^{p/(rs)} \left( \int_X \rho_Z^{r'} \rho_M\right)^{1/r'} \left( \int_X \rho_Z^{-r'}\rho_M\right)^{1/(rs')} .
\end{align*}
With this estimate we are able to use the assumption $\rho_{Z}\in Z_{p}(\rho_{M}\mm)$ to obtain again the identification $W^{1,p}_{\rho}(\mm)=H^{1,p}_{\rho}(\mm)$.
\end{proof}

The following result, which is well known in the Euclidean setting, easily follows from Theorem \ref{ZhiMo} by taking $\rho_Z=1$.
\begin{cor}\label{thm:main3}
Suppose $(X,\ud,\mm)$ is a $\PI_1$ space. Let $\rho\in A_p(\mm)$ with $\rho\in L^{1}_{\mathrm{loc}}(\mm)$ and $\rho^{-1}\in L^{1/(p-1)}(\mm)$. Then we have $W^{1,p}_\rho(\mm)=H^{1,p}_\rho(\mm)$.
\end{cor}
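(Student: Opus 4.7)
The statement follows directly from Theorem \ref{ZhiMo} by specializing to the trivial Zhikov factor. The plan is to set $\rho_M := \rho$ and $\rho_Z := 1$, so that $\rho = \rho_M \rho_Z$ and the hypothesis $\rho_M \in A_p(\mm)$ is exactly the $A_p$ assumption we are given. It remains only to verify that $\rho_Z \equiv 1$ belongs to $Z_p(\rho_M \mm)$, i.e.\ that
\[
\liminf_{n\to\infty}\frac{1}{n^p}\|1\|_{L^n(\rho_M\mm)}\|1\|_{L^n(\rho_M\mm)}<\infty.
\]

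To check this, I first need $\rho_M\mm(X)<\infty$. This is obtained exactly as in the discussion following the statement of Theorem \ref{mainthm}: the combined assumptions $\rho\in L^1_{\mathrm{loc}}(\mm)$ and $\rho^{-1}\in L^{1/(p-1)}(\mm)$ force $\mm(X)<\infty$ (since $\{\rho\geq 1\}$ and $\{\rho\leq 1\}$ both have finite measure) and in turn $\rho\in L^1(\mm)$, so $\rho_M\mm(X)=\int_X\rho\ud\mm<\infty$. Then
\[
\|1\|_{L^n(\rho_M\mm)}=(\rho_M\mm(X))^{1/n}\longrightarrow 1\qquad\text{as }n\to\infty,
\]
whence $n^{-p}\|1\|_{L^n(\rho_M\mm)}^2\to 0$ and certainly $\rho_Z\in Z_p(\rho_M\mm)$.

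Since the remaining hypotheses of Theorem \ref{ZhiMo}, namely $\rho\in L^1_{\mathrm{loc}}(\mm)$, $\rho^{-1}\in L^{1/(p-1)}(\mm)$, and that $(X,\ud,\mm)$ is a $\PI_1$ space, are assumed, Theorem \ref{ZhiMo} applies and yields $W^{1,p}_\rho(\mm)=H^{1,p}_\rho(\mm)$. There is no real obstacle here: the entire content of the corollary is that $A_p$ alone is enough, and this is precisely the case $\rho_Z=1$ of the more general product decomposition already handled.
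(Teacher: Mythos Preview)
Your approach---set $\rho_M=\rho$, $\rho_Z=1$, and invoke Theorem~\ref{ZhiMo}---is exactly the paper's. The gap is in your verification that $1\in Z_p(\rho_M\mm)$, which amounts to $\rho\mm(X)<\infty$. You claim that $\rho\in L^1_{\mathrm{loc}}(\mm)$ together with $\rho^{-1}\in L^{1/(p-1)}(\mm)$ force $\mm(X)<\infty$, appealing to the discussion after Theorem~\ref{mainthm}. But that discussion uses the Zhikov hypothesis $L_\rho<\infty$ in an essential way: it is from $L_\rho<\infty$ that one extracts an $N$ with $\|\rho\|_{L^N(\mm)}<\infty$, and \emph{that} global integrability is what yields $\mm(\{\rho\geq 1\})<\infty$. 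Local integrability of $\rho$ gives nothing of the sort. Concretely, take $X=[1,\infty)$, $\mm=\Leb{1}$ (a $\PI_1$ space) and $\rho(x)=x^{2(p-1)}$: then $\rho\in L^1_{\mathrm{loc}}(\mm)$ and $\rho^{-1/(p-1)}=x^{-2}\in L^1(\mm)$, yet $\mm(X)=\infty$ and $\{\rho\geq 1\}=X$. So the step ``$\{\rho\geq 1\}$ has finite measure'' fails as written, and with it your argument that $\rho\mm(X)<\infty$.

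This is not cosmetic: with $\rho_Z\equiv 1$ the condition $\rho_Z\in Z_p(\rho_M\mm)$ is \emph{equivalent} to $\rho_M\mm(X)<\infty$, so without that finiteness Theorem~\ref{ZhiMo} is simply not applicable. The paper's one-line deduction glosses over this point too, but the paper immediately supplies a second, self-contained argument that does not need any finite-mass hypothesis: from the $A_p$ condition one gets the pointwise comparison $M_\mm f\leq c\,(M_{\rho\mm}|f|^p)^{1/p}$, which upgrades the Lusin--Lipschitz approximation of Proposition~\ref{prop:propW11} to convergence in the weighted norm $\|\cdot\|_\rho$. That direct route is how the general case is actually covered; your reduction to Theorem~\ref{ZhiMo} only settles the case $\rho\mm(X)<\infty$.
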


A more direct proof of
Corollary \ref{thm:main3} can be obtained using the same approach described in Proposition \ref{prop:propW11}. More precisely, 
\begin{align*}
\ave_{B} |f|\ud\mm&\leq \frac{1}{\mm(B)}\left(\int_{B}|f|^p\rho\ud\mm\right)^{\frac{1}{p}}\left(\int_{B}\rho^{-\frac{1}{p-1}}\ud\mm\right)^{\frac{p}{p-1}}\\
\nonumber
&\leq \left(\ave_{B} |f|^p\rho\ud\mm\right)^{\frac{1}{p}}\left(\ave_B\rho\ud\mm\left(\ave_B\rho^{-\frac{1}{p-1}}\ud\mm\right)^{p-1}\right)^{\frac{1}{p}}
\end{align*} 
and since $\rho\in A_p(\mm)$ we get
\begin{align}\label{stimaMax}
M_{\mm} f(x)\leq c \left(M_{\rho\mm} f^p(x)\right)^{\frac{1}{p}}
\end{align}
where $c>0$ and depends only on $\rho$. Given $f\in W^{1,p}_{\rho}\subset W^{1,1}(X,\ud,\mm)$, by \eqref{eq:localip} and \eqref{stimaMax} we have
\begin{align*}
|f(x)-f(y)|\leq C\ud(x,y)\left( \left(M_{\rho\mm} |\nabla f|_{1,\mm}^p(x)\right)^{\frac{1}{p}}+\left(M_{\rho\mm} |\nabla f|_{1,\mm}^p(y)\right)^{\frac{1}{p}}\right)
\end{align*}
and, proceeding exactly as in Proposition \ref{prop:propW11}, we obtain $W^{1,p}_{\rho}\subset H^{1,p}_{\rho}$ and therefore $W^{1,p}_{\rho}=H^{1,p}_{\rho}$.

\end{document}